\newtheorem{theorem}{Theorem}[section]
\newtheorem{prop}[theorem]{Proposition}
\newtheorem{lemma}[theorem]{Lemma}
\newtheorem{false statement}{False statement}
\newtheorem{definition}[theorem]{Definition}
\newtheorem{claim}[theorem]{Claim}
\newtheorem{remark}[theorem]{Remark}
\newtheorem{conj}[theorem]{Conjecture}
\newtheorem{prob}[theorem]{Problem}
\newcommand{\la}{\lambda}
\begin{document}
\begin{sloppypar}
\title{\bf\Large  Eigenvalues and triangles in graphs}
\author{\Large Huiqiu Lin$^{a}$\thanks{Department of
Mathematics, East China University of Science and
Technology, Shanhai 200237, P.R. China. Email:
huiqiulin@126.com. Research supported by NSFC
(Nos. \ 11771141 and \ 12011530064).}~~~Bo
Ning$^{b}$\thanks{School of Mathematics,
Tianjin University, Tianjin 300072, P.R. China.}
\thanks{Corresponding author. The current
address: College of Computer Science,
Nankai University, Tianjin 300071,~P.R.~China. E-mail:
bo.ning@nankai.edu.cn. Research supported by NSFC
(Nos. 11601379 and 11971346).}~~~and Baoyindureng
Wu$^{c}$\thanks{College of Mathematics and System
Science, Xinjiang University, Urumqi, Xinjiang
830046, P.R. China. E-mail: wubaoyin@hotmail.com.
Research supported by NSFC (No. 11571294).}}
\date{}

\maketitle
\begin{center}
\begin{minipage}{140mm}
\small\noindent{\bf Abstract:}
Bollob\'as and Nikiforov [J. Combin. Theory, Ser. B. 97
(2007) 859--865] conjectured the following. If $G$ is
a $K_{r+1}$-free graph on at least $r+1$ vertices and $m$
edges, then $\la^2_1(G)+\la^2_2(G)\leq \frac{r-1}{r}\cdot2m$,
where $\la_1(G)$ and $\la_2(G)$ are the largest and the
second largest eigenvalues of the adjacency matrix $A(G)$,
respectively. In this paper, we confirm the conjecture
in the case $r=2$, by using tools from doubly stochastic
matrix theory, and also characterize all families of
extremal graphs. Motivated by classic theorems due to
Erd\H{o}s and Nosal respectively, we prove that every
non-bipartite graph $G$ of order $n$ and size $m$
contains a triangle, if one of the following is true:
(1) $\la_1(G)\geq\sqrt{m-1}$ and $G\neq C_5\cup (n-5)K_1$; and (2)
$\la_1(G)\geq \la_1(S(K_{\lfloor\frac{n-1}{2}\rfloor,\lceil\frac{n-1}{2}\rceil}))$
and $G\neq S(K_{\lfloor\frac{n-1}{2}\rfloor,\lceil\frac{n-1}{2}\rceil})$,
where $S(K_{\lfloor\frac{n-1}{2}\rfloor,\lceil\frac{n-1}{2}\rceil})$
is obtained from $K_{\lfloor\frac{n-1}{2}\rfloor,\lceil\frac{n-1}{2}\rceil}$
by subdividing an edge. Both conditions are best possible.
We conclude this paper with some open problems.

\smallskip
\noindent{\bf Keywords:} Bollob\'as-Nikiforov Conjecture;
doubly stochastic matrix; the largest eigenvalue; the
second largest eigenvalue; triangle

\medskip
\noindent {\bf Mathematics Subject Classification (2010):}~05C50\\
\end{minipage}
\end{center}

\renewcommand{\thefootnote}{}
\vskip 0.3in

\section{Introduction}
It is well known that spectra of graphs can be used
to describe structural properties and parameters of
graphs, including cycle structures \cite{H}, maximum
cuts \cite{A96}, matchings and factors \cite{BH05},
regularity \cite{LSW}, diameter \cite{C} and expander
properties \cite{A86}, etc. Recently, there has been
extensive research in the literature
(see \cite{CGH,CDKL,L,LN,TT,FZL,LLD}). Referring to
spectral extremal graph theory, a bulk of related
work was included in the detailed survey \cite{N11},
see references therein.

In this paper, we focus on spectral extremal graph
theory and mainly investigate the relationship
between triangles and eigenvalues of the adjacency
matrix of a graph. Throughout this paper, let $G$
be a graph with order $v(G):=n$, size $e(G):=m$
and clique number $\omega(G):=\omega$. Let $A(G)$
be its adjacency matrix. The eigenvalues
$\la_1(G):=\la_1\ge\la_2(G):=\la_2\ge\cdots\ge\la_n(G)$
of $A(G)$ are called the \emph{eigenvalues} of $G$.
For all integers $n\geq 1$, we set $[n]=\{1,2,\ldots,n\}$.

The study of bounding spectral radius of a graph in
terms of some parameters has a rich history. Starting
from 1985, Brualdi and Hoffman \cite{BH} proved that
$\lambda_1\leq k-1$ if $m\leq \binom{k}{2}$ for some
integer $k\geq 1$. This result was extended by
Stanley \cite{S} who showed that
$\la_1\leq\frac{1}{2}(\sqrt{8m+1}-1)$. The bound
is best possible for complete graphs (possibly with
isolated vertices), but still can be improved for
special classes of graphs, such as triangle-free
graphs (see Nosal \cite{N70}). For further
generalizations and related extensions of Stanley's
result, see Hong \cite{H93}, Hong, Shu and Fang \cite{HSF},
Nikiforov \cite{N02}, and Zhou and Cho \cite{ZC}.
Specific to bounding spectral radius of a graph in
terms of clique number, Wilf \cite{W} showed that
$\la_1\leq \frac{\omega-1}{\omega}n$. A better
inequality $\la_1\leq \sqrt{\frac{2(\omega-1)m}{\omega}}$,
implicitly conjectured by Edwards and Elphik \cite{EE},
was confirmed by Nikiforov in \cite{N02} using a
technique of Motzkin and Straus \cite{MS}. Later,
the extremal graphs when equality holds were
characterized in \cite{N06}. By the inequality
$\la_1\geq \frac{2m}{n}$, one can easily deduce
the concise form of Tur\'an's theorem that
$m\leq \frac{\omega-1}{2\omega}n^2$ from
Nikiforov's inequality. Therefore, Nikiforov's
inequality sometimes is called spectral Tur\'an's
theorem.

In 2007, Bollob\'{a}s and Nikiforov \cite{BN} posed
the following nice conjecture, which is the original
motivation of our article.
\begin{conj} (\cite[Conjecture~1]{BN})\label{Conj-BN}
Let $G$ be a $K_{r+1}$-free graph of order at least $r+1$
with $m$ edges. Then
$$\lambda_1^2+\lambda_2^2\leq \frac{r-1}{r}\cdot2m.$$
\end{conj}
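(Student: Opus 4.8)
The plan is to attack the conjecture through a two-eigenvector variational reformulation, treating $\lambda_1^2+\lambda_2^2$ as a single matrix-valued quantity rather than two separate scalars. Let $x,y$ be orthonormal eigenvectors for $\lambda_1,\lambda_2$ and form the $n\times 2$ matrix $V=[\,x\mid y\,]$, whose $i$-th row we regard as a vector $v_i\in\mathbb{R}^2$. Orthonormality gives $\sum_{i} v_iv_i^{\top}=V^{\top}V=I_2$, while the eigenequations give
$$V^{\top}AV=\sum_{ij\in E}\left(v_iv_j^{\top}+v_jv_i^{\top}\right)=\begin{pmatrix}\lambda_1 & 0\\ 0 & \lambda_2\end{pmatrix},$$
so that $\lambda_1^2+\lambda_2^2=\|V^{\top}AV\|_F^2$. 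Thus the conjecture becomes the statement that, over all assignments $i\mapsto v_i\in\mathbb{R}^2$ with $\sum_i v_iv_i^{\top}=I_2$ on a $K_{r+1}$-free host, the Frobenius norm of $\sum_{ij\in E}(v_iv_j^{\top}+v_jv_i^{\top})$ is at most $\frac{r-1}{r}\cdot 2m$. This is a matrix-weighted (two-dimensional) analogue of the quadratic form controlled by the Motzkin--Straus theorem, the engine behind Nikiforov's bound $\lambda_1^2\le\frac{r-1}{r}\cdot 2m$; the present goal is the stronger two-dimensional version.

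The key step is to prove this matrix Motzkin--Straus inequality, and first I would calibrate against the extremal configurations. A direct computation shows that the balanced complete $r$-partite graph (where $\lambda_2=0$ and $\lambda_1^2=\frac{r-1}{r}2m$) and the disjoint union of two copies of $K_r$ (where $\lambda_1=\lambda_2=r-1$) both attain equality, so any valid argument must accommodate both a spread-out and a clustered optimizer. The scalar proof runs ``edge Cauchy--Schwarz, then Motzkin--Straus on the weights $w_i=\|v_i\|^2$,'' and since here $\sum_i w_i=\mathrm{tr}(I_2)=2$, carrying it out literally yields only $\lambda_1^2+\lambda_2^2\le 4m(1-\tfrac1\omega)+2m\sum_{ij\in E}(v_i\!\cdot\! v_j)^2$, which is off by roughly a factor of two and carries a spurious nonnegative cross term. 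The plan is therefore to replace the edgewise Cauchy--Schwarz by a Lagrangian/symmetrization argument performed directly on the matrix weights $v_iv_i^{\top}$: symmetrize pairs of non-adjacent vertices so as not to decrease $\|V^{\top}AV\|_F^2/m$ while preserving $K_{r+1}$-freeness, driving the host toward a complete multipartite graph on which the two-dimensional form can be diagonalized and evaluated exactly against the equality cases above.

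The step I expect to be the main obstacle is controlling the cross term $\sum_{ij\in E}(v_i\!\cdot\! v_j)^2$, equivalently showing that orthonormality of $x$ and $y$ forces a genuine trade-off between $\lambda_1$ and $\lambda_2$ that the naive Cauchy--Schwarz cannot detect, and this is precisely where the combinatorial content of $K_{r+1}$-freeness must be injected. For $r=2$ the mechanism is clean: triangle-freeness means adjacent vertices have no common neighbor, so $A^2$ vanishes on the edge set of $G$, the relevant Gram data can be encoded by a doubly stochastic matrix, and Birkhoff's theorem together with a majorization inequality closes the gap; this is the route I would take to secure the base case $r=2$, and it matches the doubly stochastic machinery announced in this paper. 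For general $r$ the difficulty is that $\omega(G)\le r$ no longer forces such a rigid zero pattern and the extremal family is strictly richer than a single clique. My proposal is to seek the analogous structured (doubly stochastic, or more plausibly block-majorization) representation of the constraint $\omega(G)\le r$ and to prove that the matrix form is maximized at complete $r$-partite graphs or disjoint unions of $K_r$'s. Establishing that representation, and the majorization step built on it, is the crux; everything else reduces to the equality-case computation.
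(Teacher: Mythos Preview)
The statement you are addressing is a \emph{conjecture} in the paper, not a theorem: the paper does not prove it for general $r$ and explicitly says it remains open. What the paper does prove is the case $r=2$ (Theorem~\ref{Thm-BN}). Your proposal should therefore be read on two levels.

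For general $r$, what you have written is a strategy outline rather than a proof. You correctly identify the reformulation $\lambda_1^2+\lambda_2^2=\|V^\top AV\|_F^2$ and correctly observe that the naive ``edge Cauchy--Schwarz then Motzkin--Straus'' loses a factor of two. But you then state that the crux is to control the cross term $\sum_{ij\in E}(v_i\cdot v_j)^2$ via some yet-to-be-found block-majorization representation of the constraint $\omega(G)\le r$, and you do not supply that representation or the inequality built on it. This is precisely the missing idea; the proposal as written does not close the gap you yourself name, so it is not a proof of the conjecture.

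For $r=2$, your one-sentence sketch (``$A^2$ vanishes on the edge set, Gram data encoded by a doubly stochastic matrix, Birkhoff plus majorization'') does not match the paper's argument and is too vague to stand on its own. The paper does \emph{not} work through the variational picture $V^\top AV$ or any Gram matrix of the rows $v_i$. Instead it argues directly with the spectrum: assuming $\lambda_1^2+\lambda_2^2>m$, it compares the two nonnegative vectors
\[
x=(\lambda_1^2,\lambda_2^2,0,\ldots,0),\qquad y=(\lambda_n^2,\lambda_{n-1}^2,\ldots,\lambda_{n-n^-+1}^2)
\]
(of length $n^-$), shows $y\prec_w x$ from $s^++s^-=2m$, and then applies the abstract inequality ``$y\prec_w x$ with $x\ne y$ implies $\|y\|_p<\|x\|_p$'' at the specific exponent $p=\tfrac32$. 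This yields $\lambda_1^3+\lambda_2^3>\sum_{\lambda_i<0}|\lambda_i|^3$, which contradicts $\sum_i\lambda_i^3=6\,t(G)=0$ for a triangle-free graph. The doubly substochastic/Birkhoff machinery enters only in the proof of that abstract $p$-norm majorization lemma (Theorem~\ref{na}), not as an encoding of any graph-specific Gram data. The two genuinely new ingredients --- the choice $p=\tfrac32$ linking squares of eigenvalues to cubes, and the use of the triangle-count trace identity $\sum\lambda_i^3=0$ --- are absent from your sketch, and without them your $r=2$ outline does not constitute a proof either.
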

Notice that Conjecture \ref{Conj-BN}, if true, will
improve Nikiforov's inequality. To our knowledge, the
conjecture is still open now. In this paper, we make
the first progress on this conjecture. In fact, we
solve the case $r=2$ by using tools from doubly
stochastic matrix theory, and also characterize
all extremal graphs.

Let $G$ be a graph. A ``\emph{blow-up}" of $G$ is a
new graph obtained from $G$ by replacing each vertex
$x\in V(G)$ by an independent set $I_x$, in which for
any two vertices $x,y\in V(G)$, we add all edges between
$I_x$ and $I_y$ if $xy\in E(G)$. Let $P_n$ denote a path
on $n$ vertices, that is, a path of length $n-1$. For an
integer $k\geq 2$, $kP_n$ denotes the disjoint union of
$k$ copies of $P_n$.
\begin{theorem}\label{Thm-BN}
Let $G$ be a triangle-free graph of order at least $3$
with $m$ edges. Then $$\lambda_1^2+\lambda_2^2\leq m,$$
where equality holds if and only if $G$ is a blow-up
of some member of $\mathcal{G}$, in which
$\mathcal{G}=\{P_2\cup K_1,2P_2\cup K_1,P_4\cup K_1,P_5\cup K_1\}$.
\end{theorem}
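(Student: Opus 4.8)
The plan is to reduce the statement to connected graphs, settle the bipartite case by a one-line spectral-symmetry argument, and devote the real work to connected non-bipartite triangle-free graphs, where the doubly stochastic machinery is essential.

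First I would reduce to the connected case. Writing $G$ as the disjoint union of its components $G_1,\dots,G_k$ with $\lambda_1(G_1)\ge\lambda_1(G_2)\ge\cdots$, we have $\lambda_1(G)=\lambda_1(G_1)$ and $\lambda_2(G)=\max\{\lambda_2(G_1),\lambda_1(G_2)\}$. If $\lambda_2(G)=\lambda_1(G_2)$, then $\lambda_1(G)^2+\lambda_2(G)^2=\lambda_1(G_1)^2+\lambda_1(G_2)^2\le e(G_1)+e(G_2)\le m$ by Nosal's theorem $\lambda_1(H)^2\le e(H)$ for triangle-free $H$; chasing equality through Nosal's equality case (complete bipartite graphs) and through $e(G_1)+e(G_2)=m$ pins down the disconnected extremal graphs. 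If instead $\lambda_2(G)=\lambda_2(G_1)$, it suffices to prove the inequality for the connected graph $G_1$, which necessarily has at least $3$ vertices once $G$ has an edge. So from now on assume $G$ is connected on $n\ge3$ vertices.

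If $G$ is moreover bipartite, its adjacency spectrum is symmetric about $0$, hence $\lambda_1^2+\lambda_2^2=\lambda_{n-1}^2+\lambda_n^2$ and therefore $2(\lambda_1^2+\lambda_2^2)\le\sum_{i=1}^n\lambda_i^2=2m$. Equality holds exactly when $\lambda_3=\cdots=\lambda_{n-2}=0$, i.e.\ $A(G)$ has rank at most $4$; since a connected bipartite graph of rank at most $4$ is a blow-up of $P_2$, $P_4$, or $P_5$, this accounts for the three bipartite members of $\mathcal G$ (isolated vertices being absorbed by the $\cup K_1$ term).

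The substantial case is $G$ connected, non-bipartite and triangle-free, where I would aim for the strict inequality $\lambda_1^2+\lambda_2^2<m$. Two identities are at hand: $\sum_i\lambda_i^2=2m$ and, because $G$ is triangle-free, $\operatorname{tr}A^3=\sum_i\lambda_i^3=0$. From $\lambda_1^3+\lambda_2^3=-\sum_{i\ge3}\lambda_i^3\le\lambda_1\sum_{i\ge3}\lambda_i^2=\lambda_1(2m-\lambda_1^2-\lambda_2^2)$ (using $\lambda_i\ge\lambda_n\ge-\lambda_1$) together with $\lambda_2\ge0$ (a connected non-bipartite triangle-free graph is not complete multipartite, so in fact $\lambda_2>0$), one gets $2\lambda_1^2+\lambda_2^2\le2m$; this falls short of the desired $2\lambda_1^2+2\lambda_2^2\le2m$ by exactly $\lambda_2^2$, so what is needed is a genuinely sharper control of how $\lambda_3,\dots,\lambda_n$ are spread out, one that uses both top eigenvectors at once. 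This is where I would bring in doubly stochastic matrix theory: normalize the Perron vector and the $\lambda_2$-eigenvector into an auxiliary symmetric nonnegative matrix with constant line sums, and combine the Birkhoff--von Neumann theorem with the Motzkin--Straus bound $\max_{p\in\Delta}p^{T}A(G)p=\tfrac12$ (equivalent to triangle-freeness) to sharpen the cubic estimate above --- equivalently, to control the interaction of the two top eigenvectors inside $A^2=D+C$, where $D$ is the degree diagonal and $C$ the common-neighbour matrix, whose support is disjoint from $A(G)$ because $G$ is triangle-free --- while non-bipartiteness enters through the strict inequality $\lambda_1>-\lambda_n$ to make the final bound strict. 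The main obstacle is exactly this coupling: bounding $\lambda_1^2$ and $\lambda_2^2$ separately (by Nosal-type or Motzkin--Straus arguments) returns only $\lambda_1^2+\lambda_2^2\le2m$, so the two eigenvectors must be processed jointly, which is the reason the doubly stochastic viewpoint is indispensable. The equality characterization is then assembled from the three strands: the connected non-bipartite case contributes nothing, the connected bipartite case contributes the blow-ups of $P_2,P_4,P_5$, and the reduction step contributes the disconnected extremal graphs, together exactly the blow-ups of the four graphs in $\mathcal G$.
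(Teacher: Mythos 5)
Your reductions to the connected case and your treatment of the bipartite case are sound (and in fact simpler than what the paper does for those subcases: the paper runs one uniform argument for all triangle-free graphs and never splits off bipartite or disconnected graphs). But the heart of the theorem --- the inequality $\lambda_1^2+\lambda_2^2\le m$ for connected non-bipartite triangle-free graphs --- is not actually proved in your proposal. You correctly compute that the naive cubic estimate $\lambda_1^3+\lambda_2^3\le\lambda_1\sum_{i\ge3}\lambda_i^2$ falls short by $\lambda_2^2$, but the mechanism you then invoke to close the gap (an auxiliary doubly stochastic matrix built from the two top eigenvectors, Birkhoff--von Neumann, Motzkin--Straus, the decomposition $A^2=D+C$) is a description of a hoped-for strategy, not an argument; no inequality is derived from it, and it is not clear any of those tools, used in that way, would produce the missing $\lambda_2^2$.

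The idea that actually makes the paper's proof work is different and quite specific. Write $s^+=\sum_{\lambda_i>0}\lambda_i^2$ and $s^-=\sum_{\lambda_i<0}\lambda_i^2$, so $s^++s^-=2m$. If $\lambda_1^2+\lambda_2^2>m$, then $\lambda_1^2+\lambda_2^2>s^-$, and hence the vector $(\lambda_n^2,\lambda_{n-1}^2,\dots)$ of squared negative eigenvalues is weakly majorized by $(\lambda_1^2,\lambda_2^2,0,\dots,0)$. The doubly substochastic machinery enters only to prove that the $\ell_p$-norm with $p=\tfrac32$ is strictly monotone under weak majorization (via the decomposition into weak-permutation matrices plus Minkowski), which yields $\lambda_1^3+\lambda_2^3>\sum_{\lambda_i<0}|\lambda_i|^3$ and therefore $\operatorname{tr}A^3>0$, i.e.\ a triangle --- a contradiction. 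The same comparison run at equality forces $\lambda_1=-\lambda_n$, $\lambda_2=-\lambda_{n-1}$ and all other eigenvalues zero, whence $\mathrm{rank}(A)\le4$ and the blow-up characterization. Your proposal never reaches the key observation that $\lambda_1^2+\lambda_2^2>m$ already dominates the \emph{entire} negative part of the spectrum in the weak-majorization order; without it the argument does not close.
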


Recall that a quintessential result in extremal graph
theory is Mantel's theorem, which maximizes the number
of edges over all triangle-free graphs. We would emphasize
that Theorem \ref{Thm-BN} strengthens the spectral
strengthening of Mantel's theorem due to Nosal \cite{N70},
which states that every triangle-free graph $G$ on $m$ edges
satisfies that $\la_1\leq \sqrt{m}$. On the other hand, Mantel's
theorem was improved by Erd\H{o}s (see \cite[Ex.12.2.7]{BM}) in the
following form: every non-bipartite triangle-free graph of
order $n$ and size $m$ satisfies that $m\leq\frac{(n-1)^2}{4}+1$.
Notice that a subdivision of
$K_{\lfloor\frac{n-1}{2}\rfloor,\lceil\frac{n-1}{2}\rceil}$
on one edge shows the upper bound is tight. In this article,
we shall prove spectral versions of Erd\H{o}s' theorem.

Our two results are the following.

\begin{theorem}\label{Thm-trianglesize}
Let $G$ be a non-bipartite graph with size $m$. If
$\la_1\geq \sqrt{m-1}$, then $G$ contains a triangle,
unless $G$ is a $C_5$ (possibly together with some
isolated vertices).
\end{theorem}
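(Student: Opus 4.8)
Arguing by contradiction, suppose $G$ is triangle-free, non-bipartite, $\la_1\ge\sqrt{m-1}$, yet $G$ is \emph{not} a $C_5$ together with isolated vertices. The plan has three steps: (i) reduce to the connected case; (ii) show that a connected non-bipartite triangle-free graph $H$ satisfies $\la_1(H)^2\le e(H)-1$; and (iii) show the only equality case is $C_5$, which then contradicts our assumption. For step (i): isolated vertices affect neither $m$ nor $\la_1$, so delete them. As $G$ is non-bipartite it has a non-bipartite component $H$, and since a triangle-free non-bipartite graph contains an odd cycle of length $\ge 5$, we have $e(H)\ge 5$. Every other component is triangle-free, hence has spectral radius at most the square root of its own size by Nosal's inequality \cite{N70}. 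Thus if $\la_1(G)$ were attained on a bipartite component of size $m'$, then $m-1\le\la_1(G)^2\le m'\le m-e(H)\le m-5$, which is absurd; so $\la_1(G)=\la_1(H)$, and by step (ii) all the inequalities in $m-1\le\la_1(H)^2\le e(H)-1\le m-1$ are equalities. Then step (iii) forces $H=C_5$, whence $m=e(H)=5$ and every other component is an isolated vertex, i.e. $G$ is $C_5$ plus isolated vertices --- contradicting our assumption.

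For step (ii), let $x$ be the positive Perron eigenvector of a connected non-bipartite triangle-free graph $H$, and let $u$ attain $\max_v x_v$. Since $H$ is triangle-free, $N(u)$ is independent and $(A^2)_{uu}=d(u)$, so
$$\la_1^2 x_u=(A^2x)_u=d(u)x_u+\sum_{w\in N(u)}\sum_{v\in N(w)\setminus\{u\}}x_v\le\Big(d(u)+\sum_{w\in N(u)}(d(w)-1)\Big)x_u=\Big(\sum_{w\in N(u)}d(w)\Big)x_u.$$
Because $N(u)$ is independent, $\sum_{w\in N(u)}d(w)$ counts precisely the edges of $H$ meeting $N(u)$, so it is at most $e(H)$; if it equalled $e(H)$, then every edge would have an endpoint in $N(u)$, which together with the independence of $N(u)$ forces every edge to run between $N(u)$ and $V(H)\setminus N(u)$, making $H$ bipartite --- a contradiction. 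Hence $\la_1(H)^2\le\sum_{w\in N(u)}d(w)\le e(H)-1$, which proves (ii).

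For step (iii), suppose equality holds throughout in the chain above. Then for every $w\in N(u)$ and every $v\in N(w)\setminus\{u\}$ we have $x_v=x_u$: every vertex at distance exactly $2$ from $u$ is again a maximizer. Running the same computation from an arbitrary maximizer shows the set of maximizers is closed under ``distance-exactly-$2$'' neighbours; since $H$ is connected, non-bipartite and triangle-free, any two vertices are joined by an even walk whose consecutive even steps are genuine distance-$2$ moves (triangle-freeness rules out distance $1$), so every vertex is a maximizer and $x$ is constant. Then $H$ is $d$-regular with $d=\la_1(H)$, so $2e(H)=dn$ and $d^2=e(H)-1=\tfrac{1}{2}dn-1$, giving $n=2d+2/d$; integrality forces $d=2$ (for $d=1$ there is no connected non-bipartite graph), hence $n=5$ and $H=C_5$. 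Conversely $C_5$ satisfies $\la_1^2=4=e(C_5)-1$, so $C_5$ is the unique equality graph, which finishes (iii) and hence the argument.

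The eigenvector manipulation in (ii) and the short Diophantine step in (iii) are routine. The one place demanding real care is the propagation argument in (iii): deducing from tightness of the inequality at a single vertex $u$ that the whole Perron eigenvector is constant. Here both hypotheses are used essentially --- triangle-freeness to convert even-walk steps into true distance-$2$ moves, and non-bipartiteness to guarantee an even walk between every pair of vertices --- and this is exactly what pins the extremal graph down to $C_5$. I expect this to be the main obstacle; an alternative, perhaps cleaner, phrasing is to observe that for a connected triangle-free non-bipartite graph the ``distance-exactly-$2$'' graph on $V(H)$ is itself connected, so any nonempty vertex subset closed under distance-$2$ neighbours must be all of $V(H)$.
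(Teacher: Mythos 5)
Your proposal is correct, and it takes a genuinely different route from the paper's. The paper's proof splits on whether $\lambda_2\ge 1$: in the first case it invokes Theorem \ref{Thm-BN} (the $r=2$ case of the Bollob\'as--Nikiforov conjecture, itself proved via doubly stochastic matrices and majorization), and in the second case it runs a structural analysis --- the shortest odd cycle is a $C_5$, the induced subgraphs $H_1,H_2,H_3$ with $\lambda_2=1$ are forbidden, so $G$ must be a subdivided complete bipartite graph, and the Hoffman--Smith internal-path lemma then gives $\lambda_1<\sqrt{m-1}$. You instead prove directly that a connected non-bipartite triangle-free graph satisfies $\lambda_1^2\le\sum_{w\in N(u)}d(w)\le m-1$, where the $-1$ comes precisely from non-bipartiteness (if every edge met the independent set $N(u)$, the graph would be bipartite), and you settle equality by propagating the maximum of the Perron vector through the distance-exactly-$2$ graph, whose connectedness you correctly derive from connectedness, non-bipartiteness and triangle-freeness; the Diophantine step then pins down $C_5$. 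I checked the equality propagation carefully: rerunning the estimate at an arbitrary maximizer $u'$ does force all its distance-$2$ neighbours to be maximizers, because the bound $\sum_{w\in N(u')}d(w)\le m-1$ holds at \emph{every} vertex of a non-bipartite triangle-free graph, not just at $u$. Your route is more elementary and self-contained --- it needs neither Theorem \ref{Thm-BN} nor Lemma \ref{Le-HS} --- and it isolates the clean intermediate inequality $\lambda_1^2\le m-1$; what the paper's longer structural argument buys in exchange is an explicit description of the near-extremal graphs (subdivisions of complete bipartite graphs), which is reused for Theorem \ref{Thm-triangleorder}. One cosmetic remark on your step (i): the component on which $\lambda_1(G)$ is attained need not be bipartite if $G$ has several non-bipartite components, but your Nosal bound applies verbatim to any component other than $H$, so nothing is lost.
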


\begin{theorem}\label{Thm-triangleorder}
Let $G$ be a non-bipartite graph with order $n$. If
$\lambda_1\geq \lambda_1(S(K_{\lfloor\frac{n-1}{2}\rfloor,\lceil\frac{n-1}{2}\rceil}))$
where $S(K_{\lfloor\frac{n-1}{2}\rfloor,\lceil\frac{n-1}{2}\rceil})$
denotes a subdivision of $K_{\lfloor \frac{n-1}{2}\rfloor,\lceil \frac{n-1}{2}\rceil}$
on one edge, then $G$ contains a triangle unless
$G\cong S(K_{\lfloor\frac{n-1}{2}\rfloor,\lceil\frac{n-1}{2}\rceil})$.
\end{theorem}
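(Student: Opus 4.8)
Write $H_n:=S(K_{\lfloor\frac{n-1}{2}\rfloor,\lceil\frac{n-1}{2}\rceil})$; it is itself non-bipartite and triangle-free (odd girth $5$) and has $m(H_n):=\lfloor\frac{(n-1)^2}{4}\rfloor+1$ edges. The plan is to prove the equivalent statement that $\lambda_1(G)\le\lambda_1(H_n)$ for every non-bipartite triangle-free graph $G$ of order $n$, with equality only for $G\cong H_n$. I would first record three elementary facts: $\lambda_1(H_k)$ is strictly increasing in $k$ (as $H_{k+1}$ properly contains $H_k$); a bipartite triangle-free graph on $j$ vertices has $\lambda_1\le\sqrt{\lfloor j^2/4\rfloor}$; and $\lambda_1(H_n)^2>\lfloor\frac{(n-1)^2}{4}\rfloor-2$, obtained by testing a suitable vector in the Rayleigh quotient of $H_n$. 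These dispose of all disconnected $G$ and of $G=C_5\cup(n-5)K_1$, so henceforth $G$ is connected and $G\not\cong C_5$.

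Next I would bound $m$. As $G\not\cong C_5\cup(n-5)K_1$, Theorem~\ref{Thm-trianglesize} gives $\lambda_1(G)^2<m-1$, so if $m\le\lfloor\frac{(n-1)^2}{4}\rfloor-1$ then $\lambda_1(G)^2<\lfloor\frac{(n-1)^2}{4}\rfloor-2<\lambda_1(H_n)^2$ and we are done; while Erd\H{o}s' theorem gives $m\le m(H_n)$. Thus it remains to treat $m\in\{m(H_n)-1,\,m(H_n)\}$. For such $G$ let $C$ be a shortest odd cycle: then $C$ is chordless of odd length $\ge5$, every vertex off $C$ has at most two neighbours on $C$, and any two such neighbours lie at distance $2$ along $C$ (otherwise a triangle or a shorter odd cycle appears). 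Hence $m\le|C|+2(n-|C|)+\lfloor\frac{(n-|C|)^2}{4}\rfloor$, whose right side equals $m(H_n)$ when $|C|=5$ and is at most $m(H_n)-(n-4)<m(H_n)-1$ when $|C|\ge7$ (for $n\ge6$). So $G$ has an induced $5$-cycle $v_1v_2v_3v_4v_5$ and, with $W=V(G)\setminus\{v_1,\dots,v_5\}$, the same count pins $G$ to within one edge of the extremal picture: at most one vertex of $W$ has fewer than two neighbours on $C_5$; $G[W]$ is a complete bipartite graph $K_{X,Y}$ that is either balanced and missing at most one edge, or has parts differing by $2$; each vertex of $W$ with two $C_5$-neighbours carries one of the five ``distance-$2$ types'' $P_i=\{v_i,v_{i+2}\}$; and triangle-freeness forces adjacent vertices of $W$ to carry disjoint types.

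The main step is to show that among all graphs fitting this picture, $\lambda_1$ is maximized only by $H_n$. A short case analysis of the disjoint-types condition shows every such $G$ has a $C_5$-vertex $v$ with $G-v$ bipartite; I would use this to write $G=B+v$ with $B=G-v$ bipartite, parts $S,T$ satisfying $|S|+|T|=n-1$ and $B$ a complete bipartite graph minus a bounded number of edges, and $N(v)=N_S\cup N_T$ with $\emptyset\ne N_S\subseteq S$, $\emptyset\ne N_T\subseteq T$, and $B[N_S,N_T]$ edgeless — precisely the conditions forced by non-bipartiteness and triangle-freeness. Counting edges then confines $(|N_S|,|N_T|)$ to a short list. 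Since $\lambda_1(G)$ is the unique $\lambda>\lambda_1(B)$ solving $\lambda=\mathbf 1_{N(v)}^{\top}(\lambda I-A(B))^{-1}\mathbf 1_{N(v)}$, with the right side decreasing in $\lambda$, it suffices to prove $\mathbf 1_{N(v)}^{\top}\big(\lambda_1(H_n)I-A(B)\big)^{-1}\mathbf 1_{N(v)}\le\lambda_1(H_n)$, with equality only for $G\cong H_n$; equivalently one can run monotone local switches (if $\mathbf{x}$ is the Perron vector, $wa\in E$, $wb\notin E$, $b\ne w$, $x_b\ge x_a$, then moving $wa$ to $wb$ does not decrease $\lambda_1$, strictly so once the graph truly changes) to first reinstate the at-most-one missing edge, then slide the $C_5$-attachments and rebalance $X,Y$ until one reaches $H_n$; the needed comparisons $x_b\ge x_a$ follow from the eigenvalue equations along $C_5$ together with the crude bound $\lambda_1(G)^2>\lfloor\frac{(n-1)^2}{4}\rfloor-2$, which dominates every ``$|W|$-sized'' term.

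The first three paragraphs are essentially bookkeeping; the hard part will be the last one. The delicacy is twofold: one must organize the switches (or the direct comparison via the characteristic equation) so that every intermediate graph stays triangle-free, non-bipartite, and of order $n$; and one must establish the licensing inequalities, which is genuinely tight, since the gap $\lambda_1(H_n)-\lambda_1(G)$ is of lower order in $n$ and several superficially different configurations turn out to be isomorphic to $H_n$ — so the estimates must be sharp enough to see that every genuine deviation from $H_n$ strictly decreases $\lambda_1$.
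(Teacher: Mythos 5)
Your overall strategy (reduce to $m\in\{\lfloor\frac{(n-1)^2}{4}\rfloor,\lfloor\frac{(n-1)^2}{4}\rfloor+1\}$ via Theorem~\ref{Thm-trianglesize} and Erd\H{o}s' bound, classify the near-extremal triangle-free non-bipartite graphs, then compare spectral radii by local switching) is genuinely different from the paper's, which never counts edges: the paper takes $G$ extremal, uses the Perron vector to symmetrize neighborhoods (if $w\notin N(u)\cup V(C)$ then $N(w)=N(u)$; all vertices of $N(u)\setminus V(C)$ attach to $C$ identically), concludes directly that $G\cong S(K_{s+2,t+2})$ for some $s+t=n-5$, and finishes with an explicit characteristic-polynomial comparison ($f(x,s-1,t+1)-f(x,s,t)=(x-1)^2(x+2)(t-s+1)$) to identify the balanced case. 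That route avoids both of the delicate estimates your plan depends on.

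As written, your proposal has two genuine gaps. First, the inequality $\lambda_1(H_n)^2>\lfloor\frac{(n-1)^2}{4}\rfloor-2$, on which your reduction to two values of $m$ rests, is asserted but not proved, and it is razor-thin: from the quintic $x^5-(2s+2t+st+5)x^3+(4s+4t+3st+5)x-(2s+2t+2st+2)$ one finds $\lambda_1(H_n)^2=\lfloor\frac{(n-1)^2}{4}\rfloor-2+\Theta(1/n)$, so the margin vanishes as $n\to\infty$ and no casual Rayleigh-quotient test vector will certify it; you would need either the exact equitable-partition computation or a test vector tuned to order $1/n$. Second, and more seriously, the decisive step --- that among all configurations surviving your edge count (complete bipartite $G[W]$ possibly missing an edge or unbalanced by two, five possible attachment types to the $5$-cycle, one exceptional low-degree vertex) the graph $H_n$ strictly maximizes $\lambda_1$ --- is only described, not carried out. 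The ``licensing inequalities'' $x_b\ge x_a$ for your switches, the verification that each intermediate graph stays triangle-free and non-bipartite, and the uniqueness analysis are exactly where the content of the theorem lives, and you acknowledge as much; since the gap $\lambda_1(H_n)-\lambda_1(G)$ you must detect is itself of lower order, these comparisons cannot be waved through with the crude bound you cite. Until that final paragraph is turned into actual inequalities, this is a plan rather than a proof.
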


\section{The Bollob\'{a}s-Nikiforov Conjecture for triangle-free graphs}\label{Sec:2}
In Section \ref{Sec:2}, we introduce necessary
preliminaries for doubly stochastic matrix theory
and then prove Theorem \ref{Thm-BN}. For more
details on related knowledge, we refer the reader
to Zhan \cite{Zhan}.

A nonnegative square matrix is called \emph{doubly
stochastic}, if every entry is at least 0 and the
sum of the entries in every row and every column
is 1, and called \emph{doubly substochastic} if
the sum of the entries in every row and every
column is less than or equal to 1. A square matrix
is called a \emph{weak-permutation matrix} if every
row and every column has at most one nonzero entry
and all the nonzero entries (if existing) are $1$.

We also use the definition of ``\emph{a vector is
weakly majorized by the other one}" as follows,
where we rearrange the components of
$x=(x_1,x_2,\ldots, x_n),y=(y_1,y_2,\ldots,y_n)\in \mathbb{R}^n$
in non-increasing order as
$x_{[1]}\ge x_{[2]}\ge \cdots\ge x_{[n]}$
and $y_{[1]}\ge y_{[2]}\ge \cdots\ge y_{[n]}$.

\begin{definition}
Let $x=(x_1,x_2,\ldots, x_n),y=(y_1,y_2,\ldots, y_n)\in \mathbb{R}^n$. If
$$\sum_{i=1}^kx_{[i]}\le \sum_{i=1}^ky_{[i]},~~k=1,2,\ldots,n,$$
then we say that $x$ is \emph{weakly majorized by}
$y$ and denote it by $x\prec_w y$. If $x\prec_w y$
and $\sum_{i=1}^nx_i=\sum_{i=1}^ny_i$, then we say
that $x$ is \emph{majorized by} $y$ and denote it by
$x \prec y$.
\end{definition}
The following lemma is a basic property on a doubly
substochastic matrix.
\begin{lemma}\label{Le-1}
{\rm(\cite[Lemma~3.24]{Zhan})}
Let $x,y\in \mathbb{R}^n_+=\{(z_1,\ldots,z_n)|z_i\geq 0,1\leq i\leq n\}$.
Then $x\prec_w y$ if and only if there exists a
doubly substochastic matrix $A$ such that $x=Ay$.
\end{lemma}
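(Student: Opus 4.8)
The plan is to establish the two directions of the equivalence separately, regarding the sufficiency as routine bookkeeping and the necessity (the explicit construction of $A$) as the heart of the matter. For the sufficiency, suppose $x=Ay$ with $A=(a_{ij})$ doubly substochastic and $x,y\ge 0$. Fix $k$ and let $S\subseteq[n]$ with $|S|=k$ index the $k$ largest entries of $x$, so that $\sum_{i=1}^k x_{[i]}=\sum_{i\in S}x_i$. First I would interchange the order of summation to write $\sum_{i\in S}x_i=\sum_{j=1}^n c_j y_j$ where $c_j:=\sum_{i\in S}a_{ij}$. The substochasticity of $A$ forces $0\le c_j\le 1$ (a partial column sum) and $\sum_j c_j=\sum_{i\in S}(\text{row sum of }i)\le k$. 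Since $y\ge 0$, maximizing $\sum_j c_j y_j$ over the polytope $\{0\le c_j\le 1,\ \sum_j c_j\le k\}$ is a trivial fractional knapsack whose optimum puts unit weight on the $k$ largest $y_j$; hence $\sum_{i=1}^k x_{[i]}=\sum_j c_j y_j\le \sum_{i=1}^k y_{[i]}$, which is exactly $x\prec_w y$.

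For the necessity, assume $x\prec_w y$. After conjugating by permutation matrices (which preserve double substochasticity) I may assume $x_1\ge\cdots\ge x_n$ and $y_1\ge\cdots\ge y_n$. The key step is to interpose a vector $u$ with $x\le u$ componentwise and $u\prec y$ (strong majorization). I would build $u$ by a ``flat fill'': set $u_i=\max(x_i,c)$ and increase the threshold $c\ge 0$ until $\sum_i u_i=\sum_i y_i$; such $c=c^\ast$ exists since the left side is continuous and increases from $\sum_i x_i\le \sum_i y_i$ to $+\infty$. Because $x$ is non-increasing, so is $u=(\max(x_i,c^\ast))_i$, and $u\ge x\ge 0$ is immediate. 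To verify $u\prec y$ I would split at the index $p$ where the threshold begins to bind: for $k\le p$ the partial sums of $u$ agree with those of $x$ and the bound is just $x\prec_w y$, while for $k>p$ I would rewrite $\sum_{i=1}^k u_i=\sum_i y_i-(n-k)c^\ast$ and reduce the required inequality to $\frac{1}{n-k}\sum_{i=k+1}^n y_i\le c^\ast$. This last estimate follows because the defining identity for $c^\ast$ together with $\sum_{i=1}^p x_i\le \sum_{i=1}^p y_i$ gives $c^\ast\ge \frac{1}{n-p}\sum_{i=p+1}^n y_i$, and the average of a suffix of the non-increasing sequence $y$ never exceeds the average of a longer suffix.

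Once $u\prec y$ is in hand, I would apply the classical Hardy--Littlewood--P\'olya / Birkhoff theorem to get a doubly stochastic $B$ with $u=By$, and then recover $x$ by a diagonal rescaling: let $D=\mathrm{diag}(d_i)$ with $d_i=x_i/u_i$ when $u_i>0$ and $d_i=0$ otherwise, so that $0\le d_i\le 1$ (as $0\le x_i\le u_i$) and $Du=x$. Then $A:=DB$ satisfies $Ay=D(By)=Du=x$; it is nonnegative, its $i$-th row sum equals $d_i\le 1$, and its $j$-th column sum is $\sum_i d_i b_{ij}\le \sum_i b_{ij}=1$, so $A$ is doubly substochastic, completing the construction. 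I expect the main obstacle to be the necessity direction, and within it the verification that the flat-fill vector $u$ is genuinely majorized by $y$ --- that is, the suffix-averaging estimate above --- since the sufficiency and the final rescaling are essentially routine once $u$ has been produced.
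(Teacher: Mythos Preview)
The paper does not actually prove this lemma: it is quoted verbatim as \cite[Lemma~3.24]{Zhan} and used as a black box, so there is no ``paper's own proof'' to compare against. That said, your argument is a correct and essentially self-contained proof of the cited result.

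A couple of remarks on the necessity direction, since you flagged the suffix-averaging step as the potential sticking point. Your flat-fill construction $u_i=\max(x_i,c^\ast)$ with $c^\ast$ chosen so that $\sum_i u_i=\sum_i y_i$ is well defined because $\sum_i\max(x_i,c)$ is continuous and nondecreasing in $c$, starting at $\sum_i x_i\le\sum_i y_i$ and tending to $+\infty$. The verification that $u\prec y$ for $k>p$ reduces, as you write, to $\frac{1}{n-k}\sum_{i>k}y_i\le c^\ast$; your chain of inequalities for this is correct, and the final step (a shorter tail of a non-increasing nonnegative sequence has average no larger than a longer tail) is just the observation that the extra terms $y_{p+1},\dots,y_k$ being appended each dominate every term in the shorter tail. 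The edge cases are harmless: if $\sum_i x_i=\sum_i y_i$ then $c^\ast=0$ works and $u=x$ is already majorized by $y$; if some $u_i=0$ then necessarily $c^\ast=0$ and $x_i=0$, so setting $d_i=0$ still gives $(Du)_i=x_i$.

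In short: your proof is sound, and it supplies what the paper merely cites. The approach (interpose a strongly majorized vector via raising the small coordinates, invoke Hardy--Littlewood--P\'olya/Birkhoff for a doubly stochastic $B$, then contract rows by a diagonal $0\le D\le I$) is the standard route to this result and is presumably close to what appears in Zhan's book.
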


One of the main ingredients in our proof is using
the relationship between doubly (sub)stochastic
matrix and (weak-)permutation matrix.
\begin{lemma}\label{Le-2}
{\rm(\cite[Theorem~3.22]{Zhan})}
Every doubly substochastic matrix is a convex
combination of weak-permutation matrices.
\end{lemma}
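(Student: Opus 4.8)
The plan is to reduce the statement to Birkhoff's theorem — every doubly stochastic matrix is a convex combination of permutation matrices — by embedding the given $n\times n$ doubly substochastic matrix as a corner block of a $2n\times 2n$ doubly stochastic matrix, and then restricting back.

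First I would record the two ``deficiency'' vectors. Given a doubly substochastic matrix $S=(s_{ij})$ of order $n$, set $r_i=1-\sum_{j}s_{ij}$ and $c_j=1-\sum_i s_{ij}$; substochasticity guarantees $r_i,c_j\ge 0$, and a double-counting of the total entry sum gives $\sum_i r_i=\sum_j c_j$. I would then form the block matrix
\[
T=\begin{pmatrix} S & D_r\\ D_c & S^{\mathsf T}\end{pmatrix},
\]
where $D_r=\mathrm{diag}(r_1,\dots,r_n)$ and $D_c=\mathrm{diag}(c_1,\dots,c_n)$. The crucial verification is that $T$ is doubly stochastic: every entry is nonnegative, and the diagonal deficiency blocks exactly compensate the shortfalls of $S$, namely $\sum_j s_{ij}+r_i=1$ for each top row and $\sum_i s_{ij}+c_j=1$ for each left column, while in the lower-right corner $S^{\mathsf T}$ supplies row sums $1-c_i$ and column sums $1-r_j$, which is precisely what the bottom rows and right columns require. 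This is the step I would treat most carefully, since the whole argument hinges on choosing the three completing blocks so that all $2n$ row sums and all $2n$ column sums equal $1$.

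Next I would apply Birkhoff's theorem to $T$, writing $T=\sum_k \alpha_k P_k$ with $\alpha_k\ge 0$, $\sum_k\alpha_k=1$, and each $P_k$ a $2n\times 2n$ permutation matrix. Reading off the top-left $n\times n$ corner yields $S=\sum_k \alpha_k Q_k$, where $Q_k$ is the leading $n\times n$ principal submatrix of $P_k$. It then remains only to observe that each $Q_k$ is a weak-permutation matrix: a permutation matrix has exactly one $1$ in each row and in each column, so any submatrix obtained by selecting a subset of rows and a subset of columns has at most one $1$ per row and per column, with every nonzero entry equal to $1$ (the all-zero matrix being the degenerate case). Hence $S$ is a convex combination of weak-permutation matrices, as claimed.

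The main obstacle is essentially concealed inside the invocation of Birkhoff's theorem: all the genuine combinatorics — extracting a single (weak-)permutation pattern from the support by a Hall/König-type matching argument and then iterating — lives there. If a self-contained argument were demanded, I would instead prove the statement directly by induction on the number of nonzero entries of $S$, at each step using the deficiency structure to locate a weak-permutation matrix $Q$ whose support is contained in that of $S$, and subtracting a maximal multiple $\beta Q$ so that $(S-\beta Q)/(1-\beta)$ remains doubly substochastic with strictly fewer nonzero entries. The delicate point in that route is guaranteeing the existence of such a $Q$, which is exactly the matching lemma underpinning Birkhoff's theorem itself.
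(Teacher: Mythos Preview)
The paper does not supply its own proof of this lemma: it is stated with a citation to \cite[Theorem~3.22]{Zhan} and used as a black box. Your argument via embedding $S$ into the $2n\times 2n$ doubly stochastic matrix $T=\begin{pmatrix}S & D_r\\ D_c & S^{\mathsf T}\end{pmatrix}$ and invoking Birkhoff's theorem is correct (the row/column-sum verification goes through exactly as you describe), and this is in fact the standard proof of the result, so there is nothing substantive to compare.
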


The following theorem will play an essential
role in our proof of Theorem \ref{Thm-BN},
whose proof uses Minkowski's inequality
(see \cite[pp.8~]{K}).

\begin{lemma}[Minkowski's inequality]\label{Lem-Mink}
Let $x,y\in \mathbb{R}^n_+$. If $p>1$, then
$\|x+y\|_p\leq \|x\|_p+\|y\|_p$. Moreover, if
$x,y\in \mathbb{R}^n_+$, $x\neq\theta$, and
$y\neq\theta$ where $\theta=(0,0,\ldots,0)$,
then equality holds if and only if there exists
$\alpha>0$ such that $x=\alpha y$.
\end{lemma}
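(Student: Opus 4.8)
The plan is to prove the inequality by the standard Hölder-duality argument and then track the equality case carefully, since the inequality itself is classical while the sharp description of equality is what the later application needs. First I would dispose of the trivial case $p=1$ separately (there the inequality is just the triangle inequality applied coordinatewise and is always an equality on $\mathbb{R}^n_+$); so assume $p>1$ and let $q=p/(p-1)$ be the conjugate exponent. If $\|x+y\|_p=0$ then both sides vanish and there is nothing to prove, so assume $\|x+y\|_p>0$. The key algebraic step is the pointwise identity
$$(x_i+y_i)^p = x_i(x_i+y_i)^{p-1} + y_i(x_i+y_i)^{p-1},$$
valid since all entries are nonnegative. Summing over $i$ and applying Hölder's inequality to each of the two sums on the right — pairing the vector $x$ (resp.\ $y$) with the vector whose $i$-th entry is $(x_i+y_i)^{p-1}$ — gives
$$\|x+y\|_p^p \le \big(\|x\|_p+\|y\|_p\big)\,\Big(\sum_i (x_i+y_i)^{(p-1)q}\Big)^{1/q} = \big(\|x\|_p+\|y\|_p\big)\,\|x+y\|_p^{p/q},$$
using $(p-1)q=p$. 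Dividing through by $\|x+y\|_p^{p/q}$ (legitimate since it is positive) and noting $p-p/q=1$ yields $\|x+y\|_p\le\|x\|_p+\|y\|_p$.

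For the equality characterization, suppose $x,y\in\mathbb{R}^n_+$ are both nonzero and $\|x+y\|_p=\|x\|_p+\|y\|_p$. Then every inequality used above must be an equality. In particular both applications of Hölder must be tight, and the equality condition for Hölder's inequality in $\mathbb{R}^n_+$ forces the vector $(x_i^p)_i$ to be a scalar multiple of $((x_i+y_i)^p)_i$ and likewise $(y_i^p)_i$ a scalar multiple of $((x_i+y_i)^p)_i$; chasing the scalars (and handling the support issue, i.e.\ that $x_i+y_i=0$ forces $x_i=y_i=0$) shows $x_i = s\,(x_i+y_i)$ and $y_i = t\,(x_i+y_i)$ for constants $s,t\ge 0$ with $s+t=1$. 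If $s=0$ then $x=\theta$, contradicting $x\neq\theta$; if $s=1$ then $y=\theta$, again a contradiction. Hence $0<s<1$, and then $y_i=\frac{t}{s}x_i$ for all $i$, i.e.\ $x=\alpha y$ with $\alpha=s/t>0$. Conversely, if $x=\alpha y$ with $\alpha>0$ then both sides equal $(1+\alpha)\|y\|_p$, so equality holds.

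The only genuinely delicate point is the equality analysis for Hölder's inequality on the nonnegative orthant together with the bookkeeping of the zero coordinates: one must be careful that "$(x_i^p)$ proportional to $((x_i+y_i)^p)$'' and the analogous statement for $y$ can be combined into a single proportionality $x=\alpha y$ without dividing by zero. Everything else is routine. I expect to state Hölder's inequality (with its equality case) as a known fact — or, if the paper prefers self-containedness, to derive both Hölder and its equality case from the weighted AM–GM / Young's inequality $ab\le a^p/p+b^q/q$, whose equality case $a^p=b^q$ then propagates to the proportionality statement. Given the context, citing the classical inequality as in \cite{K} and focusing the written proof on the equality discussion is the cleaner route.
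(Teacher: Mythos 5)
Your proof is correct. Note, however, that the paper does not actually prove this lemma: it is stated as a known result and attributed to the reference \cite{K} (p.~8), so there is no in-paper argument to compare against. Your H\"older-duality derivation --- splitting $(x_i+y_i)^p = x_i(x_i+y_i)^{p-1}+y_i(x_i+y_i)^{p-1}$, applying H\"older with conjugate exponent $q=p/(p-1)$, and dividing by $\|x+y\|_p^{p/q}$ --- is the standard textbook proof, and your treatment of the equality case is the part that actually matters for the paper (it feeds into Lemma~\ref{Lem-MultMink} and Theorem~\ref{na}). The bookkeeping there is sound: H\"older's equality condition gives $x_i^p=c\,(x_i+y_i)^p$ and $y_i^p=c'\,(x_i+y_i)^p$ with $c,c'>0$ because $x$, $y$, and hence $x+y$ are all nonzero; writing $x_i=s(x_i+y_i)$, $y_i=t(x_i+y_i)$ and summing forces $s+t=1$ with $0<s,t<1$, which yields $x=\alpha y$ with $\alpha=s/t>0$, and the converse is immediate. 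The only cosmetic remark is that your preliminary discussion of the case $p=1$ is unnecessary, since the lemma assumes $p>1$.
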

By induction on $k$ (see below), Lemma \ref{Lem-Mink}
can be extended to a multiple version easily.

\begin{lemma}[Multiple Minkowski's inequality]\label{Lem-MultMink}
Let $k\in \mathbb{Z}$, $k\geq 2$, and $x^i\in \mathbb{R}^n_+$
where $i\in [k]$. If $p>1$ then
$\|\sum\limits_{i=1}^k x^i\|_p\leq \sum\limits_{i=1}^k\|x^i\|_p$.
Moreover, if $x^i\neq \theta$ for all $i$, then equality
holds if and only if there exists $\alpha_{i,j}>0$ such
that $x^i=\alpha_{i,j} x^j$ for all $i,j\in [n]$ with
$i\neq j$.
\end{lemma}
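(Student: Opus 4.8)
The plan is to induct on $k$, the base case $k=2$ being precisely Lemma \ref{Lem-Mink}. For the inductive step, suppose $k\geq 3$ and that the statement, including its equality characterization, holds for $k-1$ summands. Writing $S=\sum_{i=1}^{k-1}x^i$, I would split $\sum_{i=1}^{k}x^i=S+x^k$, apply the two-term inequality of Lemma \ref{Lem-Mink} to the pair $(S,x^k)$, and then apply the inductive hypothesis to $S$:
$$\Big\|\sum_{i=1}^{k}x^i\Big\|_p\leq \|S\|_p+\|x^k\|_p\leq \sum_{i=1}^{k-1}\|x^i\|_p+\|x^k\|_p=\sum_{i=1}^{k}\|x^i\|_p,$$
which yields the asserted inequality.

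For the equality statement, I would first dispatch the easy direction: if all the $x^i$ are pairwise positive scalar multiples of one another, fix $x^k$ and write $x^i=\beta_i x^k$ with $\beta_i>0$; then $\sum_{i=1}^{k}x^i=\big(\sum_{i=1}^k\beta_i\big)x^k$, so both sides of the inequality equal $\big(\sum_{i=1}^k\beta_i\big)\|x^k\|_p$ and equality holds. For the converse, assume all $x^i\neq\theta$ and that equality holds in the displayed chain. Since a sum of vectors in $\mathbb{R}^n_+$ can have no cancellation, $S\neq\theta$ (indeed $(S)_j\geq (x^1)_j$ coordinatewise). Equality in the chain forces equality at each of the two steps: equality in Lemma \ref{Lem-Mink} applied to the nonzero vectors $S$ and $x^k$ gives some $\alpha>0$ with $S=\alpha x^k$; and equality in the inductive hypothesis applied to the nonzero vectors $x^1,\dots,x^{k-1}$ gives positive scalars $\alpha_{i,j}$ with $x^i=\alpha_{i,j}x^j$ for all distinct $i,j\in[k-1]$. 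In particular each $x^i$ with $i\leq k-1$ is a positive multiple of $x^1$, hence so is $S$, and then $S=\alpha x^k$ forces $x^k$ to be a positive multiple of $x^1$ as well. Thus $x^1,\dots,x^k$ are pairwise positive scalar multiples, which completes the induction.

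I do not anticipate a genuine obstacle here; the proof is a routine induction. The only points demanding care are bookkeeping the nonvanishing hypotheses required to invoke the equality clauses of Lemma \ref{Lem-Mink} and of the inductive hypothesis, together with the elementary observation that nonnegative vectors cannot cancel, which is exactly what guarantees $S\neq\theta$.
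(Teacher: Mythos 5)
Your proof is correct and follows exactly the route the paper indicates: the paper offers no written proof, merely remarking that the lemma follows ``by induction on $k$'' from Lemma \ref{Lem-Mink}, and your argument is precisely that induction, with the equality case handled carefully (in particular the observation that $S=\sum_{i=1}^{k-1}x^i\neq\theta$ because nonnegative vectors cannot cancel). No discrepancy with the paper's intended approach.
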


\begin{theorem}\label{na}
Let $x=(x_1,x_2,\ldots,x_n),y=(y_1,y_2,\ldots,y_n)\in \mathbb{R}^n_+$
such that $\{x_i\}_{i=1}^n$ and $\{y_i\}_{i=1}^n$
are in non-increasing order. If $y\prec_w x$, then
$\|y\|_p\leq\|x\|_p$ for every real number $p>1$,
where equality holds if and only if $x=y$.
\end{theorem}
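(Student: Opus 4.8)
The plan is to turn the weak‑majorization hypothesis into a matrix identity via the dictionary of Lemmas~\ref{Le-1} and~\ref{Le-2}, and then reduce the estimate to the multiple Minkowski inequality. Since $y\prec_w x$ and $x,y\in\mathbb{R}^n_+$, Lemma~\ref{Le-1} gives a doubly substochastic matrix $A$ with $y=Ax$. By Lemma~\ref{Le-2} we may write $A=\sum_{j=1}^N c_jP_j$ as a convex combination of weak‑permutation matrices $P_j$, so $c_j\ge 0$, $\sum_{j}c_j=1$, and $y=\sum_{j=1}^N c_j(P_jx)$. Discarding the indices with $c_j=0$, we may assume every $c_j>0$.

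The next ingredient is the elementary observation that a weak‑permutation matrix cannot increase a $p$-norm: for $z\in\mathbb{R}^n_+$ the entries of $P_jz$ are, up to their positions, a sub-multiset of the entries of $z$ (each row of $P_j$ selects at most one coordinate of $z$, and each coordinate is selected at most once), hence $\|P_jz\|_p\le\|z\|_p$, with equality exactly when every coordinate in the support of $z$ is selected. Applying Lemma~\ref{Lem-MultMink} to the vectors $c_jP_jx$ then yields, for every $p>1$,
\[
\|y\|_p=\Big\|\sum_{j=1}^N c_jP_jx\Big\|_p\le\sum_{j=1}^N c_j\|P_jx\|_p\le\sum_{j=1}^N c_j\|x\|_p=\|x\|_p,
\]
which is the asserted inequality. (The hypothesis that $x$ and $y$ are sorted is not needed for this part, since $p$-norms are permutation invariant.)

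For the equality characterization, assume $\|y\|_p=\|x\|_p$. If $x=\theta$ then $y=\theta$ and we are done, so suppose $x\neq\theta$. Equality in the second displayed inequality forces $\|P_jx\|_p=\|x\|_p$ for every $j$, so each $P_jx$ is nonzero and carries exactly the same multiset of nonzero entries as $x$. Now Lemma~\ref{Lem-MultMink} (its hypothesis $x^i\neq\theta$ being satisfied) forces the vectors $c_jP_jx$ to be pairwise positive scalar multiples of one another; since they all have the same $p$-norm, they are in fact all equal, say to a common vector $w$, whence $y=\sum_j c_j w=w=P_{j_0}x$ for any $j_0$. Thus $y$ is obtained from $x$ by permuting its (nonzero) coordinates, so $x$ and $y$ have the same non-increasing rearrangement; as both are given sorted, $x=y$. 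The converse direction is trivial.

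I expect the only delicate point to be the equality bookkeeping: one must be sure the $c_j$ may be taken strictly positive, that no surviving $P_jx$ vanishes, and that Lemma~\ref{Lem-MultMink}'s nonvanishing hypothesis genuinely holds before invoking its equality clause; and the sorting hypothesis is invoked only at the very end to upgrade "$y$ is a rearrangement of $x$" to "$y=x$". Everything else is routine manipulation with the two structural lemmas on doubly substochastic matrices.
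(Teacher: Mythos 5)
Your proof is correct and takes essentially the same route as the paper's: the decomposition $y=\sum_j c_jP_jx$ via Lemmas \ref{Le-1} and \ref{Le-2}, the two-step Minkowski estimate, and the equality analysis forcing $y=P_{j_0}x$ and then $y=x$ from the sorting hypothesis. The only slip is in the equality case: it is the vectors $P_jx$ (not $c_jP_jx$, whose norms are $c_j\|x\|_p$ and need not coincide) that share the common $p$-norm $\|x\|_p$; dividing the proportionality relations supplied by Lemma \ref{Lem-MultMink} by the positive scalars $c_j$ repairs this and yields $P_jx=P_{j_0}x$ for all $j$, exactly as in the paper.
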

\begin{proof}
If $x=\theta$, then $y=\theta$. Now assume $x\neq \theta$.
Since $y\prec_w x$, there exists a doubly substochastic
matrix $A$ such that $y=Ax$ by Lemma \ref{Le-1}. By
Lemma \ref{Le-2}, there are $s$ weak-permutation matrices
$P_i$ for all $i\in [s]$, such that $A=\sum_{i=1}^sa_iP_i$,
where $\sum_{i=1}^sa_i=1$, $a_i\geq 0$. Without loss
of generality, we can assume $a_i>0$ for all $i\in [s]$.
Notice that $y=Ax=(\sum_{i=1}^sa_iP_i)x=\sum_{i=1}^sa_i(P_ix)$.
Therefore,
\begin{align*}
\|y\|_p=\|\sum_{i=1}^sa_i(P_ix)\|_p
\le \sum_{i=1}^sa_i\|P_ix\|_p\le\sum_{i=1}^sa_i\|x\|_p
=(\sum_{i=1}^sa_i)\cdot\|x\|_p=\|x\|_p.
\end{align*}

If $x=y$, then obviously $\|x\|_p=\|y\|_p$. If
$\|x\|_p=\|y\|_p$, then
\begin{align}\label{equ}
\|\sum_{i=1}^sa_i(P_ix)\|_p= \sum_{i=1}^sa_i\|P_ix\|_p=\sum_{i=1}^sa_i\|x\|_p.
\end{align}
Since $\|P_ix\|_p\leq\|x\|_p$, from (\ref{equ}), we obtain $\|P_ix\|_p=\|x\|_p>0$
for all $i\in [s]$, and so $a_iP_ix\neq \theta$ for each $i$.
By Lemma \ref{Lem-MultMink}, the first equality of (\ref{equ})
implies that for any pair of distinct integers $i,j\in [s]$,
there exists a real number $\alpha_{i,j}>0$ such that
$a_i(P_ix)=\alpha_{i,j}a_j(P_jx)$. By the second equality
of (\ref{equ}), since each $a_i>0$, we have
$||P_ix||_p=\|x\|_p=||P_jx||_p$. Then $\alpha_{i,j}\cdot a_j=a_i\neq 0$.
Thus, $P_ix=P_jx$ and moreover $P_ix=P_1x$ for each $i\in [s]$.
It follows that $y=\sum_{i=1}^sa_i(P_ix)=P_1x$.

Since $y=P_1x$ and $\|x\|_p=\|y\|_p$ where $P_1$ is a
weak-permutation matrix, we know that $\{y_i\}_{i=1}^n$
is just a rearrange of elements of $\{x_i\}_{i=1}^n$.
As both $\{x_i\}_{i=1}^n$ and $\{y_i\}_{i=1}^n$ are
non-increasing sequences, we have $y=x$. The proof
is complete.
\end{proof}
For a graph $G$, the \emph{rank} of $G$, denoted by
$rank(G)$, is defined as the rank of $A(G)$. We need
Theorem 3.3 and Theorem 4.3 in \cite{O} to characterize
the extremal graphs in Theorem \ref{Thm-BN}, and so
list them as a lemma below.
\begin{lemma}{\rm(\cite{O})}\label{lem5}
Let $G$ be a graph with order $n$. Then we have
the following statements.

\noindent (I) If $rank(G)=2$, then $G$ is a blow-up
of $P_2\cup K_1$.

\noindent (II) If $G$ is a bipartite graph with
$rank(G)=4$, then $G$ is a blow-up of $\Gamma$,
where $\Gamma\in\{2P_2\cup K_1,P_4\cup K_1,P_5\cup K_1\}$.
\end{lemma}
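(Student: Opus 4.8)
The plan is to put $A(G)$ into the bipartite block form and analyse the off‑diagonal $0$‑$1$ block, after two rank‑preserving reductions: deleting isolated vertices, and identifying any two vertices with the same neighbourhood. Both operations leave $\mathrm{rank}\,A(G)$ unchanged (a duplicated row/column is a trivial linear combination of another, and an isolated vertex contributes a zero row and column), and if $G'$ is the graph obtained this way then $G$ is a blow‑up of $G'$ together with a (possibly empty) set of isolated vertices, i.e.\ a blow‑up of $G'\cup K_1$. So it suffices to classify the ``reduced'' graphs of rank $2$ (resp.\ bipartite rank $4$): those with no isolated vertex and with no two vertices sharing a neighbourhood.

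For (I): from $\mathrm{tr}\,A(G)=0$ and $\mathrm{tr}\,A(G)^2=2m>0$ the two nonzero eigenvalues are $\lambda$ and $-\lambda$, so the spectrum is symmetric about $0$. Since $A(G)$ is block‑diagonal over the connected components and any component containing an edge already has rank $\ge 2$ (it contains the principal submatrix $\left[\begin{smallmatrix}0&1\\1&0\end{smallmatrix}\right]$), there is at most one non‑trivial component; as there are no isolated vertices, $G$ is connected, and a connected graph with spectrum symmetric about $0$ is bipartite. Writing $A(G)=\left[\begin{smallmatrix}0&B\\ B^{\top}&0\end{smallmatrix}\right]$ for the bipartition $(X,Y)$ gives $2\,\mathrm{rank}(B)=\mathrm{rank}\,A(G)=2$, so $B$ is a $0$‑$1$ matrix of rank $1$ with no zero row or column; hence all its rows equal one fixed $0$‑$1$ vector, which must have no zero coordinate, so $B=J$ and $G=K_{|X|,|Y|}$. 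Adding back the isolated vertices, $G$ is a blow‑up of $P_2\cup K_1$.

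For (II): after the reductions $G'$ is bipartite with $\mathrm{rank}\,A(G')=4$, so in block form $A(G')=\left[\begin{smallmatrix}0&B'\\ (B')^{\top}&0\end{smallmatrix}\right]$ the matrix $B'$ satisfies $\mathrm{rank}(B')=2$, is $0$‑$1$, has no zero row or column, and — by twin‑reduction — has no two equal rows and no two equal columns. The crux is the claim that, up to transposing and permuting rows/columns, $B'\in\{\,\left[\begin{smallmatrix}1&0\\0&1\end{smallmatrix}\right],\ \left[\begin{smallmatrix}1&0\\1&1\end{smallmatrix}\right],\ \left[\begin{smallmatrix}1&1&0\\0&1&1\end{smallmatrix}\right]\,\}$, which are the bipartite adjacency matrices of $2P_2$, $P_4$, $P_5$. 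First I would bound the size: if $v,w$ are two linearly independent $0$‑$1$ rows of $B'$, then a short case analysis according to whether $\mathrm{supp}(v)$ and $\mathrm{supp}(w)$ are nested, or cross while meeting, or are disjoint, shows the $2$‑dimensional row space contains at most three nonzero $0$‑$1$ vectors (namely $v$, $w$, and one of $v+w$, $v-w$, $w-v$); hence $B'$ has at most three distinct rows, and dually at most three distinct columns. A finite check of the shapes $2\times 2$, $2\times 3$, $3\times 2$, $3\times 3$ then finishes it: the $2\times 2$ case yields $2P_2$ or $P_4$; the $2\times 3$ and $3\times 2$ cases yield $P_5$ (the three columns must be the three nonzero vectors of $\{0,1\}^2$); and in the $3\times 3$ case the rows are forced to be $\{v,w,v\pm w\}$, after which the no‑zero‑column requirement forces two columns to coincide, a contradiction. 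Therefore $G'\in\{2P_2,P_4,P_5\}$ and $G$ is a blow‑up of one of $2P_2\cup K_1$, $P_4\cup K_1$, $P_5\cup K_1$.

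I expect the main obstacle to be the classification of $B'$ in part (II): obtaining the ``at most three $0$‑$1$ vectors in a plane'' bound in a clean, uniform way, and then organising the small‑size case analysis so that the no‑zero and no‑repeated row/column constraints are used systematically to eliminate everything except $2P_2$, $P_4$, $P_5$. Everything else — the two reductions, the identity $\mathrm{rank}\,A(G)=2\,\mathrm{rank}(B)$ for bipartite $G$, and ``symmetric spectrum $\Rightarrow$ bipartite'' for connected graphs — is standard and routine.
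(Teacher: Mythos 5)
Your proof is correct. There is, however, nothing in the paper to compare it with: the authors quote this lemma from Oboudi's classification of bipartite graphs with few nonzero eigenvalues (Theorems 3.3 and 4.3 of \cite{O}) and give no argument, so what you have written is a self-contained elementary replacement rather than a variant of the paper's proof. Your two rank-preserving reductions are sound (twins in a bipartite graph with no isolated vertices necessarily lie on the same side, so twin-freeness really does translate into ``no repeated row and no repeated column'' of $B'$), with the caveat that exhibiting $G$ as a blow-up requires allowing some parts $I_x$ to be empty --- but that convention is forced by the statement itself, since $K_{2,2}$ has rank $2$ and no isolated vertex. Part (I) is routine. The step that carries part (II) is the claim that a $2$-dimensional subspace of $\mathbb{R}^n$ contains at most three nonzero $0$--$1$ vectors; your support trichotomy for two independent $0$--$1$ generators $v,w$ proves exactly this (crossing supports admit only $v$ and $w$; nested supports add only $v-w$; disjoint supports add only $v+w$), and the finite check of the $2\times2$, $2\times3$ and $3\times3$ shapes then correctly leaves only $2P_2$, $P_4$ and $P_5$: in the $3\times3$ case the rows must be $\{v,w,v+w\}$ with disjoint supports covering all three columns, or $\{v,w,v-w\}$ with $v$ the all-ones row, and in either configuration two columns coincide. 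What your route buys is brevity and self-containedness for exactly the ranks needed here; what the citation buys is Oboudi's more general machinery, which also treats rank $6$, where the number of $0$--$1$ vectors in the row space is no longer bounded by three and your case analysis would grow substantially.
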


We shall give a proof of Theorem \ref{Thm-BN}. Before
it, we define ``\emph{the inertia of a graph}" as the
ordered triple $(n^+,n^-,n^0)$, where $n^+$, $n^-$
and $n^0$ are the numbers (counting multiplicities)
of positive, negative and zero eigenvalues of the
adjacency matrix $A(G)$, respectively.

\noindent
{\bf Proof of Theorem \ref{Thm-BN}.}
Let $n$ be the order of $G$ and $(n^+,n^-,n^0)$ be the inertia
of $G$. Set $s^+:=\la_{1}^2+\cdots+\la^2_{n^+}$ and
$s^-:=\la_{n-n^-+1}^2+\cdots+\la_n^2$. Since $G$ is triangle-free,
we have $G\ncong K_n$, and so $\la_2(G)\geq 0$ (see Lemma 5
in \cite{H88}).

Suppose that $\la_1^2+\la_2^2>m$. Since $s^++s^-=2m$,
we have $\la_1^2+\la_2^2>\frac{s^++s^-}{2}$, and so
$\la_1^2+\la_2^2\geq 2(\la_1^2+\la_2^2)-s^+>s^-\geq0$.
Now, we construct two $n^-$-vectors $x$ and $y$ such
that $x=(\la_1^2,\la_2^2,0, \ldots, 0)^T$ and
$y=(\la_n^2, \la_{n-1}^2,\ldots,\la_{n-n^-+1}^2)^T$.
Since $\la_1^2+\la_2^2>s^{-}$, we have $y\prec_w x$
and $x\neq y$. Set $p=\frac{3}{2}$. By Theorem \ref{na},
we have
$\|x\|_\frac{3}{2}^\frac{3}{2}>\|y\|_\frac{3}{2}^\frac{3}{2}$,
that is,
$\la_1^3+\la_2^3>|\la_n|^3+|\la_{n-1}|^3+\cdots+|\la_{n-n^-+1}|^3$.
It implies that
\begin{eqnarray*}
t(G)=\frac{\la_1^3+\la_2^3+\cdots+\la_{n^+}^3+\la_{n-n^-+1}^3+\cdots+\la_n^3}{6}
\ge\frac{\la_1^3+\la_2^3+\la_{n-n^-+1}^3+\cdots+\la_n^3}{6}>0.
\end{eqnarray*}
This gives us a contradiction. Thus, we proved
$\la_1^2+\la_2^2\leq m$.

If $\la_1^2+\la_2^2=m$, then $\la_1^2+\la_2^2\geq s^-\geq0$.
It follows that $y\prec_w x$. By Theorem \ref{na}, we have
$\|x\|_\frac{3}{2}^\frac{3}{2}\geq\|y\|_\frac{3}{2}^\frac{3}{2}$.
Since $G$ is triangle-free, this implies that
\begin{eqnarray*}
0=t(G)=\frac{\la_1^3+\la_2^3+\cdots+\la_{n^+}^3+\la_{n-n^-+1}^3+\cdots+\la_n^3}{6}
\ge\frac{\la_1^3+\la_2^3+\la_{n-n^-+1}^3+\cdots+\la_n^3}{6}\geq0.
\end{eqnarray*}
Therefore, $\la_1^3+\la_2^3=-(\la_{n-n^-+1}^3+\cdots+\la_n^3)$, which
implies $\|x\|_\frac{3}{2}^\frac{3}{2}=\|y\|_\frac{3}{2}^\frac{3}{2}$.
Again by Theorem \ref{na}, $x=y$. It follows that $\la_1^2=\la_n^2$
and $\la_2^2=\la_{n-1}^2$. Thus $\la_1=-\la_n$ and $\la_2=-\la_{n-1}$.
By the trace formula $\sum_{i=1}^n\la_i=0$, we infer that all the
remaining eigenvalues are 0. If $\la_2=0$, then $rank(G)=2$. By
Lemma \ref{lem5} (I), $G$ is a blow-up of $P_2\cup K_1$. Recall
$\la_1=-\la_n$, which implies that $G$ is bipartite. If $\la_2\neq 0$,
then $rank(G)=4$. By Lemma \ref{lem5} (II) and the fact that $G$
is bipartite, $G$ is a blow-up of $\Gamma$, where $\Gamma$ is
$2P_2\cup K_1$ or $P_4\cup K_1$ or $P_5\cup K_1$. The proof
is complete. {\hfill$\Box$}

By using the same method as in the proof of Theorem \ref{Thm-BN},
we can deduce the following.
\begin{theorem} {\rm(\cite[Theorem~2(i)]{N09})}
Let $G$ be a graph of size $m$. If $\la_1^2\geq m$, then
$G$ contains a triangle, unless $G$ is a blow up of
$P_2\cup K_1$.
\end{theorem}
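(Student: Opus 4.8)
The plan is to run the argument of Theorem~\ref{Thm-BN} with the shorter vector $x=(\la_1^2,0,\ldots,0)^T$ in place of $(\la_1^2,\la_2^2,0,\ldots,0)^T$. Assume $G$ is triangle-free and $\la_1^2\ge m$ (we may clearly assume $m\ge 1$, since otherwise $G$ is edgeless and the statement is degenerate); the goal is to deduce that in fact $\la_1^2=m$ and that $G$ is a blow-up of $P_2\cup K_1$. Let $(n^+,n^-,n^0)$ be the inertia of $G$ and put $s^+=\sum_{\la_i>0}\la_i^2$ and $s^-=\sum_{\la_i<0}\la_i^2$, so that $s^++s^-=\sum_i\la_i^2=2m$, while $\sum_i\la_i^3=6t(G)=0$ because $G$ is triangle-free. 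Since $s^+\ge\la_1^2\ge m$, we get $s^-=2m-s^+\le m\le\la_1^2$, and note $\la_1>0$ and $n^-\ge 1$ as $G$ has an edge.

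Next I would form the two nonnegative, non-increasingly ordered $n^-$-vectors $x=(\la_1^2,0,\ldots,0)^T$ and $y=(\la_n^2,\la_{n-1}^2,\ldots,\la_{n-n^-+1}^2)^T$. Because $x$ has a single nonzero coordinate and $\sum_i y_i=s^-\le\la_1^2=\sum_i x_i$, every partial sum of $y$ is at most $\la_1^2$, so $y\prec_w x$. Applying Theorem~\ref{na} with $p=\tfrac{3}{2}$ gives $\la_1^3=\|x\|_{3/2}^{3/2}\ge\|y\|_{3/2}^{3/2}=\sum_{\la_i<0}|\la_i|^3$. Combining this with the trivial bound $\sum_{\la_i>0}\la_i^3\ge\la_1^3$ and $\sum_i\la_i^3=0$ yields
\[
0=6t(G)=\sum_{\la_i>0}\la_i^3-\sum_{\la_i<0}|\la_i|^3\ge\la_1^3-\sum_{\la_i<0}|\la_i|^3\ge 0,
\]
so equality must hold throughout. (In passing, this reproves Nosal's bound $\la_1\le\sqrt m$: if $\la_1^2>m$ then $s^-<m<\la_1^2$, hence $x\ne y$, so Theorem~\ref{na} forces a strict inequality above and $t(G)>0$, a contradiction.)

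It then remains to read off the extremal structure from the two equalities forced above. Since $\la_1>0$, the equality $\sum_{\la_i>0}\la_i^3=\la_1^3$ forces $n^+=1$. The equality $\|x\|_{3/2}^{3/2}=\|y\|_{3/2}^{3/2}$ forces $x=y$ by the equality clause of Theorem~\ref{na}, so $\la_n^2=\la_1^2$ and $\la_{n-1}^2=\cdots=\la_{n-n^-+1}^2=0$; since a negative eigenvalue cannot vanish, this gives $n^-=1$. Hence $rank(G)=n^++n^-=2$, and Lemma~\ref{lem5}(I) identifies $G$ as a blow-up of $P_2\cup K_1$; moreover $2m=s^++s^-=\la_1^2+\la_n^2=2\la_1^2$, so $\la_1^2=m$. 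Conversely, any blow-up of $P_2\cup K_1$ is a complete bipartite graph $K_{a,b}$ together with some isolated vertices, which is triangle-free with $\la_1^2=ab=m$, so these graphs are genuine exceptions.

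As in the proof of Theorem~\ref{Thm-BN}, the only delicate point is the equality analysis: one has to split the single displayed chain into the two independent equalities $\sum_{\la_i>0}\la_i^3=\la_1^3$ and $\|x\|_{3/2}=\|y\|_{3/2}$, and translate them into the inertia data $n^+=n^-=1$, after which the rank-$2$ classification of Lemma~\ref{lem5}(I) finishes the job. Everything else is a direct specialization of the argument already used for Theorem~\ref{Thm-BN}.
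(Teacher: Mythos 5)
Your proof is correct and is precisely the specialization the paper has in mind when it says the theorem follows ``by using the same method as in the proof of Theorem~\ref{Thm-BN}'': you replace $(\la_1^2,\la_2^2,0,\ldots,0)^T$ by $(\la_1^2,0,\ldots,0)^T$, apply Theorem~\ref{na} with $p=\tfrac{3}{2}$, and read off $n^+=n^-=1$ from the equality case before invoking Lemma~\ref{lem5}(I). The equality analysis (splitting the chain into $\sum_{\la_i>0}\la_i^3=\la_1^3$ and $\|x\|_{3/2}=\|y\|_{3/2}$) is handled correctly, so no further comment is needed.
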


\section{Proofs of Theorems \ref{Thm-trianglesize}
and \ref{Thm-triangleorder}}
A walk $v_1v_2\cdots v_k$ $(k\geq2)$ in a graph $G$ is
called an \emph{internal path}, if these $k$ vertices
are distinct (except possibly $v_1=v_k$), $d_G(v_1)\geq3$,
$d_G(v_k)\geq3$ and $d_G(v_2) =\cdots= d_G(v_{k-1}) = 2$
(unless $k = 2$). We denote by $G_{uv}$ the graph obtained
from $G$ by subdividing the edge $uv$, that is, introducing
a new vertex on the edge $uv$. Let $Y_n$ be the graph
obtained from an induced path $v_1v_2\cdots v_{n-4}$ by
attaching two pendant vertices to $v_1$ and other two
pendant vertices to $v_{n-4}$.

Hoffman and Smith \cite{HS} proved the following result
(see also Ex.14 in \cite[pp.79]{CDS}), which is used
towards the structure of extremal graphs in
Theorem \ref{Thm-trianglesize}.
\begin{lemma}{\rm(\cite{HS})}\label{Le-HS}
Let $G$ be a connected graph with $uv \in E(G)$. If
$uv$ belongs to an internal path of $G$ and
$G\ncong Y_n$, then $\la_1(G_{uv})<\la_1(G)$.
\end{lemma}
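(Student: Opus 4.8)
The plan is to reconstruct Hoffman and Smith's argument: two reductions, then a comparison estimate supplying the decisive inequality, packaged through a characteristic-polynomial identity.

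\emph{Reductions.} Relabelling the degree-$2$ vertices of the internal path shows that, up to isomorphism, $G_{uv}$ depends only on $G$ and on the chosen internal path, not on which of its edges is subdivided. So I may write $G=G_k$, where $G_j$ is obtained from a fixed graph $H$ carrying two vertices $a,b$ of $H$-degree $\geq 2$ by joining $a$ and $b$ through an internal path of length $j$ (the case of a closed internal walk, $a=b$, is parallel and I omit it); then $G_{uv}=G_{k+1}$. Next I reduce to $\lambda_1(G_k)>2$: a connected graph with spectral radius $\leq 2$ is, by Smith's classification, a Dynkin or extended-Dynkin diagram, and the only such graph with an internal path is $\widetilde D_n\cong Y_n$, which is excluded. (The same fact shows $\lambda_1(G_k)>2\Rightarrow\lambda_1(G_{k-1})>2$, used for an induction below.) It remains to prove $\lambda_1(G_{k+1})<\lambda_1(G_k)$.

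\emph{The crux: a comparison estimate.} Write $H^{(a)}$ for $H$ with a half-infinite pendant path attached at $a$ and put $\rho_a^\infty:=\lambda_1(H^{(a)})\geq 2$, and similarly $\rho_b^\infty$; a long internal path forces a Perron eigenvector to localize at one end, so $\lim_{j\to\infty}\lambda_1(G_j)=\max(2,\rho_a^\infty,\rho_b^\infty)$. I would prove
$$\lambda_1(G_k)>\rho_a^\infty \qquad\text{and}\qquad \lambda_1(G_k)>\rho_b^\infty .$$
For the first, let $w>0$ be a Perron eigenfunction of $H^{(a)}$ for $\rho_a^\infty$ (decaying geometrically along the ray $r_0=a,r_1,\dots$), and transplant it onto $G_k$ by setting $z:=w$ on $H$ and $z_{c_i}:=w_{r_i}$ along the internal path $a=c_0,\dots,c_k=b$. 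Checking the eigenvalue equations, $A(G_k)z\geq\rho_a^\infty z$ with equality at every vertex except $b$, where $(A(G_k)z)_b-\rho_a^\infty z_b=w_{r_{k-1}}>0$; since $z>0$ and $G_k$ is connected, Perron--Frobenius gives $\lambda_1(G_k)>\rho_a^\infty$. \textbf{This is the main obstacle:} the transplant is valid only when $w_b>0$, which can fail only if $H$ is disconnected with $a,b$ in different components (the internal path is then a bridge-path); these configurations must be handled by hand — patching $z$ on the $b$-side by solving $(\rho_a^\infty I-A(H_b))\,z|_{H_b}=\tau e_b$ with $0<\tau\le w_{r_{k-1}}$ — and they are precisely the ones forcing $G\cong Y_n$, hence do not occur once $\lambda_1(G_k)>2$.

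\emph{Packaging via the characteristic polynomial.} Applying Schwenk's vertex-deletion formula at a degree-$2$ internal-path vertex and inducting on $j$ gives
$$\phi(G_j,x)=P_{j-1}(x)\phi(H,x)-P_{j-2}(x)\bigl(\phi(H-a,x)+\phi(H-b,x)\bigr)+P_{j-3}(x)\phi(H-a-b,x)-S(x),$$
where $P_m$ is the characteristic polynomial of the path on $m+1$ vertices and $S(x)=2\sum_Q\phi(H-V(Q),x)$, the sum over $a$--$b$ paths $Q$ in $H$, is independent of $j$. With $\rho:=\lambda_1(G_k)>2$ and $\mu:=\tfrac12(\rho+\sqrt{\rho^2-4})>1$, regrouping gives $\psi_j:=\phi(G_j,\rho)=A\mu^j+B\mu^{-j}-S$, where $S\geq 0$ (each $H-V(Q)$ is a proper induced subgraph of the connected $G_k$, so $\phi(H-V(Q),\rho)>0$) and $\psi_k=0$. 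The sign of $A$ is that of $\lim_j\psi_j/P_{j-1}(\rho)$, i.e.\ records whether $\lim_j\lambda_1(G_j)<\rho$; by the comparison estimate $\rho>\max(2,\rho_a^\infty,\rho_b^\infty)$, so $A>0$. To conclude: for $k\geq 2$, the induction of the first paragraph gives $\lambda_1(G_k)<\lambda_1(G_{k-1})$, hence $\psi_{k-1}=\phi(G_{k-1},\rho)<0$, and the identity $\psi_{k+1}+\psi_{k-1}=(\rho-2)S\geq 0$ forces $\psi_{k+1}>0$; for the base case $k=1$ one uses $A>0$, $S\geq 0$ and $\psi_1=0$ directly, where $\psi_2>0$ reduces to $S<A\mu(\mu+1)$, obtained from a closer look at $S$ (equivalently $\Phi_-(\rho)\geq 0$, $\Phi_-$ being the companion quadratic combination of the $\phi(H-\cdot)$'s). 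In all cases $\psi_{k+1}=\phi(G_{k+1},\rho)>0$ with $\rho>2$, so $\lambda_1(G_{k+1})<\rho$. Finally, equality cannot sneak in: $\lambda_1(G_{k+1})=\lambda_1(G_k)$ would force $\psi_j=0$ for every $j$, hence $\Phi_+(\rho)=\Phi_-(\rho)=S(\rho)=0$, which unwinds to $G\cong Y_n$ with $\rho=2$, contradicting $\rho>2$. This completes the proof.
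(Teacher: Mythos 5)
The paper does not prove this lemma at all: it is quoted verbatim from Hoffman and Smith \cite{HS}, so there is no internal proof to compare against, and your reconstruction must be judged on its own. Its overall architecture is the right one (reduce to $\lambda_1(G_k)>2$ via Smith's classification, compare $\lambda_1(G_k)$ with the one-sided limits $\rho_a^\infty,\rho_b^\infty$, and close via a three-term recursion for $\phi(G_j,x)$), but two load-bearing steps are wrong or unsupported. First, your handling of the case where $H$ is disconnected with $a$ and $b$ in different components is incorrect: you assert that these configurations ``are precisely the ones forcing $G\cong Y_n$,'' but taking $H$ to be two disjoint triangles containing $a$ and $b$ gives an internal path with $\lambda_1(G_k)>2$ and $G_k\ncong Y_n$, while the Perron vector $w$ of $H^{(a)}$ vanishes on the component of $b$, so the transplanted $z$ is not positive and the key inequality $\lambda_1(G_k)>\rho_a^\infty$ is not established by your argument; the proposed patch requires $\rho_a^\infty>\lambda_1(H_b)$ to yield a positive solution, which is not guaranteed in general.

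Second, the passage from signs of the characteristic polynomial to the ordering of spectral radii is incomplete: $\phi(G_{k+1},\rho)>0$ does not by itself give $\lambda_1(G_{k+1})<\rho$ (a monic characteristic polynomial is also positive on intervals below $\lambda_2$), and likewise $\rho<\lambda_1(G_{k-1})$ does not give $\phi(G_{k-1},\rho)<0$. Both steps need the additional input $\lambda_2(G_{k\pm1})<\rho$, which one can obtain by deleting an internal-path vertex and noting that the resulting graph is a proper subgraph of the connected graph $G_k$, but this is absent from your write-up. Beyond these, the argument leans on Hoffman's limit theorem ($\lim_j\lambda_1(G_j)=\max(2,\rho_a^\infty,\rho_b^\infty)$), on the identification of the sign of $A$ with that limit, and on the base case $k=1$ (``a closer look at $S$''), all asserted rather than proved. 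As it stands this is a plausible outline of the Hoffman--Smith argument, not a complete proof.
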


Now we shall prove Theorems \ref{Thm-trianglesize}
and \ref{Thm-triangleorder}.

\begin{figure}[htbp]
\centering
\unitlength 3mm 
\linethickness{0.4pt}
\ifx\plotpoint\undefined\newsavebox{\plotpoint}\fi 
\begin{picture}(28,9)(0,0)
\thicklines
\multiput(3,8)(-.03333333,-.03333333){60}{\line(0,-1){.03333333}}
\put(1,6){\line(0,-1){3}}
\put(1,3){\line(1,0){4}}
\put(5,3){\line(0,1){3}}
\multiput(5,6)(-.03333333,.03333333){60}{\line(0,1){.03333333}}
\put(3,8){\line(0,1){0}}
\put(3,8){\line(1,0){2}}
\multiput(11,8)(-.03333333,-.03333333){60}{\line(0,-1){.03333333}}
\put(9,6){\line(0,-1){3}}
\put(9,3){\line(1,0){4}}
\put(13,3){\line(0,1){3}}
\multiput(13,6)(-.03333333,.03333333){60}{\line(0,1){.03333333}}
\put(11,8){\line(0,1){0}}
\put(11,8){\line(1,0){5}}
\put(16,8){\line(-3,-5){3}}
\put(13,3){\line(0,1){0}}
\put(13,3){\line(0,1){0}}
\put(13,6){\line(1,-1){4}}
\multiput(17,2)(-.2666667,.0333333){30}{\line(-1,0){.2666667}}
\put(9,3){\line(0,1){0}}
\multiput(23,8)(-.03333333,-.03333333){60}{\line(0,-1){.03333333}}
\put(21,6){\line(0,-1){3}}
\put(21,3){\line(1,0){4}}
\put(25,3){\line(0,1){3}}
\multiput(25,6)(-.03333333,.03333333){60}{\line(0,1){.03333333}}
\put(23,8){\line(0,1){0}}
\put(23,8){\line(1,0){4}}
\put(27,8){\line(-2,-5){2}}
\put(25,3){\line(0,1){0}}
\put(23,8){\line(-1,0){4}}
\put(19,8){\line(2,-5){2}}
\put(21,3){\line(0,1){0}}


\put(5,8){\circle*{.5}}
\put(3,8){\circle*{.5}}
\put(1,6){\circle*{.5}}
\put(1,3){\circle*{.5}}
\put(5,3){\circle*{.5}}
\put(5,6){\circle*{.5}}
\put(11,8){\circle*{.5}}
\put(9,6){\circle*{.5}}
\put(9,3){\circle*{.5}}
\put(13,3){\circle*{.5}}
\put(13,6){\circle*{.5}}
\put(16,8){\circle*{.5}}
\put(17,2){\circle*{.5}}
\put(23,8){\circle*{.5}}
\put(21,6){\circle*{.5}}
\put(21,3){\circle*{.5}}
\put(25,3){\circle*{.5}}
\put(25,6){\circle*{.5}}
\put(27,8){\circle*{.5}}
\put(19,8){\circle*{.5}}

\put(3,8.5){\makebox(0,0)[cc]{\tiny$u_1$}}
\put(0.3,6){\makebox(0,0)[cc]{\tiny$u_2$}}
\put(0.3,3){\makebox(0,0)[cc]{\tiny$u_3$}}
\put(5.9,3){\makebox(0,0)[cc]{\tiny$u_4$}}
\put(5.9,6){\makebox(0,0)[cc]{\tiny$u_5$}}
\put(5.9,8){\makebox(0,0)[cc]{\tiny$v$}}
\put(11,8.5){\makebox(0,0)[cc]{\tiny$u_1$}}
\put(8.3,6){\makebox(0,0)[cc]{\tiny$u_2$}}
\put(8.3,3){\makebox(0,0)[cc]{\tiny$u_3$}}
\put(13.9,3){\makebox(0,0)[cc]{\tiny$u_4$}}
\put(13.9,6){\makebox(0,0)[cc]{\tiny$u_5$}}
\put(16,8.5){\makebox(0,0)[cc]{\tiny$v$}}
\put(18,1.7){\makebox(0,0)[cc]{\tiny$w$}}
\put(23,8.5){\makebox(0,0)[cc]{\tiny$u_1$}}
\put(20.3,6){\makebox(0,0)[cc]{\tiny$u_2$}}
\put(20.3,3){\makebox(0,0)[cc]{\tiny$u_3$}}
\put(25.7,3){\makebox(0,0)[cc]{\tiny$u_4$}}
\put(25.7,6){\makebox(0,0)[cc]{\tiny$u_5$}}
\put(28,8){\makebox(0,0)[cc]{\tiny$v$}}
\put(18,8){\makebox(0,0)[cc]{\tiny$w$}}
\put(3,0){\makebox(0,0)[cc]{\small$H_1$}}
\put(12,0){\makebox(0,0)[cc]{\small$H_2$}}
\put(23,0){\makebox(0,0)[cc]{\small$H_3$}}
\end{picture}
  \caption{The graphs $H_1$, $H_2$ and $H_3$.}\label{fig-1}
\end{figure}
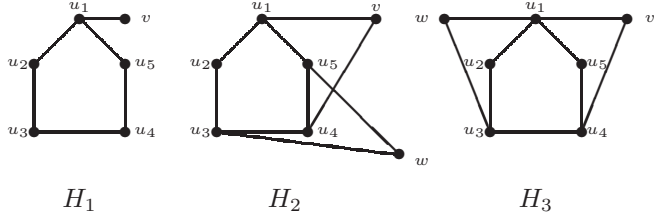

\noindent
{\bf Proof of Theorem \ref{Thm-trianglesize}.}
Suppose to the contrary that $G$ contains no triangles.
Assume $\la_2\geq 1$. Then $\la_1^2+\la_2^2\geq m$.
Since $G$ is non-bipartite, by Theorem \ref{Thm-BN},
$G$ contains a triangle, a contradiction. Now assume
$\la_2< 1$. This implies that, if $G$ is disconnected
then its every component is an isolated vertex except
for one component.

We consider the case that $G$ is connected.
Let $s$ be the length of a shortest odd
cycle of $G$, where $s\geq 5$. Note that
$\la_2(C_{s})=2\cos\frac{2\pi}{s}$. If $s\geq 6$,
then $\la_2(C_{s})\geq 1$, and by Cauchy's
interlacing theorem, $\la_2(G)\geq\la_2(C_s)=1$,
a contradiction. Thus $s=5$. Let
$S=\{u_i:1\leq i\leq 5\}\subseteq V(G)$
with $G[S]=u_1u_2u_3u_4u_5u_1$. If $n=5$,
then $G\cong C_5$, and we are done.
Let $T=N(S)\backslash S$.

We shall use the property that $G$ contains
no $H_i$ as an induced subgraph where $i=1,2,3$,
since $\lambda_2(H_i)=1>\la_2(G)$ (recall $H_i$'s
in Figure 1). In the following, we say that $G$
is $H$-free if it contains no $H$ as an induced
subgraph.

We first claim that $d_S(v)=2$ for each $v\in T$.
For $v\in T$, without loss of generality, assume
that $v\in N(u_1)$. If $d_S(v)\geq 3$, then there
exists $i\in [5]$ such that $vu_i,vu_{i+1}\in E(G)$,
where the subscripts $i,i+1$ are taken modulo 5 and
$u_0=u_5$. In this case, there is a triangle
$vu_iu_{i+1}v$ in $G$, a contradiction.
If $d_S(v)=1$, then $N_S(v)=\{u_1\}$ and
$\{v,u_1,u_2,u_3,u_4,u_5\}$ induces an $H_1$,
a contradiction. This shows that $d_S(v)=2$ for each
$v\in T$. Next, we claim that $V(G)=S\cup T$. Indeed,
if not, there exists at least one vertex, say $v'$,
which is at distance 2 from $S$.
We can assume that $v'vu_1$ is an induced $P_3$ such
that $v'u_i\notin E(G)$ for any $i\in [5]$. Since
$d_S(v)=2$, by symmetry, we can assume
$N_S(v)=\{u_1,u_3\}$. Since $G$ is triangle-free
and $v'u_i\notin E(G)$ for all $i\in [5]$,
we can find that $\{v',v,u_3,u_4,u_5,u_1\}$
induces an $H_1$, a contradiction. This shows
that $V(G)=S\cup T$.

We choose $v\in T$ and assume $N_{S}(v)=\{u_1,u_3\}$
(by symmetry). Recall that $n\geq 6$. If $n=6$,
then $m=7$, and by a simple calculation,
$\la_1(G)=2.3914<\sqrt{6}$, a contradiction.
Therefore, $n\geq 7$ and this implies
$V(G)\backslash (S\cup \{v\})\neq \emptyset$.
It follows that $T\backslash \{v\}\neq \emptyset$.

Let $w\in T\backslash \{v\}$. If $N_{S}(w)=N_{S}(v)$
then $wv\notin E(G)$; if $N_{S}(w)\neq N_{S}(v)$,
then $N_{S}(w)\cap N_{S}(v)=\emptyset$, since $G$
is $H_3$-free and triangle-free. Thus,
$N_S(w)=\{u_1,u_3\}$, or $N_{S}(w)=\{u_2,u_4\}$,
or $N_{S}(w)=\{u_2,u_5\}$. Furthermore, for each
of the latter two cases, we have $vw\in E(G)$ since
$G$ is $H_2$-free. Indeed, if $N_{S}(w)=\{u_2,u_4\}$,
since $G$ is triangle-free and $H_3$-free, every
vertex in $T$ is adjacent to $u_1$ and $u_3$, or
to $u_2$ and $u_4$; if $N_S(w)=\{u_2,u_5\}$, then
every vertex in $T$ is adjacent to $u_1$ and $u_3$,
or to $u_2$ and $u_5$.

In the following, we assume $N_{S}(w)=\{u_2,u_4\}$.
Let $A=N_G(u_1)\cap N_G(u_3)$ and
$B=N_G(u_2)\cap N_G(u_4)$. By reasoning the analysis
above, we infer that both $A$ and $B$ are independent
sets, $A\cup B=T\cup \{u_2,u_3\}$, and $G[A\cup B]$
is a complete bipartite subgraph. Let $|A|=a$ and
$|B|=b$. Then $m=ab+(a+1)+(b+1)=(a+1)(b+1)+1$, and
$G$ is a subdivision of $K_{a+1,b+1}$ on some edge.
By Lemma \ref{Le-HS},
$\la_1(G)<\la_1(K_{a+1,b+1})=\sqrt{(a+1)(b+1)}=\sqrt{m-1}$,
a contradiction.

If $G$ is disconnected, then there is only one
non-trivial component. We apply the conclusion obtained above
to the component, and shall get $G=C_5\cup (n-5)K_1$,
where $n$ is the order of $G$. The proof is complete. {\hfill$\Box$}

In the following, we use $S(G)$ to denote a subdivision
of $G$ on an edge, if the subdivision is unique up to
isomorphic. Proof of Theorem \ref{Thm-triangleorder}
uses two propositions, whose proofs are postponed to
the Appendix.

\begin{prop}\label{Prop-1}
Let $s,t$ be two integers. If $t\geq s\geq 1$, then
$\la_1(S(K_{s+2,t+2}))>\la_1(S(K_{s+1,t+3}))$.
\end{prop}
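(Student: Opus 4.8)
The plan is to realize $\la_1(S(K_{p,q}))$ as the largest root of an explicit polynomial $h_{p,q}$ and then to exploit that, among subdivisions of complete bipartite graphs of a fixed order, $h_{p,q}$ depends on $(p,q)$ in such a rigid way that the proposition reduces to a one-line sign computation. Write $X,Y$ for the parts of $K_{p,q}$ (with $|X|=p\le|Y|=q$) and $z$ for the vertex subdividing an edge $xy$ with $x\in X$, $y\in Y$; the five cells $\{x\},\,X\setminus\{x\},\,\{z\},\,\{y\},\,Y\setminus\{y\}$ form an equitable partition of $S(K_{p,q})$ with quotient matrix
$$
B_{p,q}=\begin{pmatrix}
0&0&1&0&q-1\\
0&0&0&1&q-1\\
1&0&0&1&0\\
0&p-1&1&0&0\\
1&p-1&0&0&0
\end{pmatrix},
$$
which is nonnegative and irreducible since $S(K_{p,q})$ is connected, so $\la_1(S(K_{p,q}))$ is its Perron root. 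Writing $B_{p,q}x=\lambda x$ for a positive vector $x=(x_1,\dots,x_5)^{T}$, I would use the last three equations to solve for $x_3,x_4,x_5$ in terms of $x_1,x_2$, substitute into the first two, and eliminate $x_1,x_2$; this shows that $\lambda=\la_1(S(K_{p,q}))$ is a root of
$$
h_{p,q}(\lambda):=(\lambda-1)^2\bigl(\lambda^2+\lambda-p+1\bigr)\bigl(\lambda^2+\lambda-q+1\bigr)-\bigl[(p-1)\lambda^2+\lambda-(p-1)\bigr]\bigl[(q-1)\lambda^2+\lambda-(q-1)\bigr],
$$
and a short determinant expansion gives the identity $h_{p,q}(\lambda)=\lambda\cdot\det(\lambda I-B_{p,q})$, so $\la_1(S(K_{p,q}))$ is in fact the \emph{largest} real root of the monic degree-$6$ polynomial $h_{p,q}$.

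Next, set $u=\lambda^2+\lambda+1$ and $w=\lambda^2-1$, so that $\lambda^2+\lambda-p+1=u-p$ and $(p-1)\lambda^2+\lambda-(p-1)=(p-1)w+\lambda$; expanding gives
$$
h_{p,q}(\lambda)=(\lambda-1)^2\bigl(u^2-(p+q)u+pq\bigr)-\bigl((pq-p-q+1)w^2+(p+q-2)\lambda w+\lambda^2\bigr).
$$
Thus the coefficients of $h_{p,q}$ depend on $(p,q)$ only through $p+q$ and $pq$, affinely in $pq$, with $pq$-coefficient $(\lambda-1)^2-w^2=-(\lambda-1)^2\lambda(\lambda+2)$. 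Since $S(K_{s+2,t+2})$ and $S(K_{s+1,t+3})$ share the value $p+q=s+t+4$ while $(s+2)(t+2)-(s+1)(t+3)=t-s+1\ge 1$, it follows that
$$
h_{s+2,t+2}(\lambda)-h_{s+1,t+3}(\lambda)=-(t-s+1)(\lambda-1)^2\lambda(\lambda+2)<0\qquad\text{for every }\lambda>1.
$$
Put $\mu:=\la_1(S(K_{s+1,t+3}))$. As $S(K_{s+1,t+3})$ contains $K_{s+1,t+2}$ as a subgraph, $\mu\ge\sqrt{(s+1)(t+2)}\ge\sqrt{6}>1$, so evaluating the last display at $\lambda=\mu$ yields $h_{s+2,t+2}(\mu)<h_{s+1,t+3}(\mu)=0$. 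But $h_{s+2,t+2}$ is monic of even degree, hence strictly positive for every $\lambda$ exceeding its largest root $\la_1(S(K_{s+2,t+2}))$; since $h_{s+2,t+2}(\mu)<0$, this forces $\mu<\la_1(S(K_{s+2,t+2}))$, which is precisely the claim.

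The only genuinely laborious step is the derivation in the first paragraph: carrying out the elimination and checking the identity $h_{p,q}(\lambda)=\lambda\det(\lambda I-B_{p,q})$, equivalently confirming that $\la_1(S(K_{p,q}))$ is the \emph{largest} root of $h_{p,q}$ rather than merely a root. Should the determinant identity prove awkward, an alternative is to verify directly that the rational function $R(\lambda)$ with numerator $[(p-1)\lambda^2+\lambda-(p-1)][(q-1)\lambda^2+\lambda-(q-1)]$ and denominator $(\lambda-1)^2(\lambda^2+\lambda-p+1)(\lambda^2+\lambda-q+1)$ is strictly decreasing on $[\la_1(S(K_{p,q})),\infty)$ and equals $1$ at $\la_1(S(K_{p,q}))$, which again pins down the largest root. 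Everything after that is a routine sign chase, the only substantive input being the single transparent fact that $h_{p,q}$ is affine in $pq$ with $pq$-coefficient negative for $\lambda>1$.
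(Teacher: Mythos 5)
Your proof is correct and follows essentially the same route as the paper: your polynomial $h_{p,q}$ is exactly $\lambda$ times the degree-$5$ factor $f(x,s,t)$ of the characteristic polynomial that the paper uses, and your key identity $h_{s+2,t+2}(\lambda)-h_{s+1,t+3}(\lambda)=-(t-s+1)(\lambda-1)^2\lambda(\lambda+2)$ is the paper's computation $f(x,s-1,t+1)-f(x,s,t)=(t-s+1)(x-1)^2(x+2)$, followed by the same sign chase at the largest root. The only difference is cosmetic: you derive the polynomial explicitly from the five-cell equitable partition and its quotient matrix, whereas the paper simply states the characteristic polynomial.
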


\begin{prop}\label{Prop-2}
Let $s,t$ be two integers. If $t\geq s\geq 1$ and
$s+t=n-5$, then
$\lambda_1(S(K_{\lfloor \frac{n-1}{2}\rfloor,\lceil \frac{n-1}{2}\rceil}))\geq \lambda_1(S(K_{s+2,t+2}))$,
where equality holds if and only if
$(s,t)=(\lfloor \frac{n-1}{2}\rfloor,\lceil \frac{n-1}{2}\rceil)$.
\end{prop}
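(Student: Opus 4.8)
\textbf{Proof proposal for Proposition \ref{Prop-2}.}

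The plan is to reduce the statement to Proposition \ref{Prop-1} together with a monotonicity argument along the lattice of admissible pairs $(s,t)$ with $s+t=n-5$, $t\ge s\ge 1$. Write $f(s,t):=\la_1(S(K_{s+2,t+2}))$. Proposition \ref{Prop-1} says that if $t\ge s\ge 1$ then $f(s,t)>f(s-1,t+1)$ (after reindexing: the pair $(s+2,t+2)$ becomes $(s+1,t+3)$ exactly when we move one unit from the smaller side to the larger side). Hence among all pairs with a fixed sum $n-5$, the value of $f$ is strictly maximized when $s$ and $t$ are as close to balanced as possible. Since $(s+2)+(t+2)=n-1$, balancing $s+2$ and $t+2$ is the same as balancing the two parts of $K_{\lfloor (n-1)/2\rfloor,\lceil(n-1)/2\rceil}$; that is, the maximizer is $(s,t)=(\lfloor\frac{n-1}{2}\rfloor-2,\lceil\frac{n-1}{2}\rceil-2)$, for which $S(K_{s+2,t+2})=S(K_{\lfloor(n-1)/2\rfloor,\lceil(n-1)/2\rceil})$. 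The ``only if'' direction is immediate from the strictness in Proposition \ref{Prop-1}: any unbalanced pair can be pushed one step closer to balanced, strictly increasing $f$, so it cannot attain the maximum.

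In detail, I would argue as follows. Fix $n$ and let $(s,t)$ be admissible with $s+t=n-5$ and $1\le s\le t$. If $s\ge 2$ and $t\ge s$, apply Proposition \ref{Prop-1} with parameters $(s-1,t+1)$ in place of $(s,t)$ there — one checks the hypothesis $t+1\ge s-1\ge 1$ holds whenever $s\ge 2$ — to obtain $f(s-1+1,\,t+1-1)$ compared with $f(s-1,\,t+1)$, i.e.\ $f(s,t)>f(s-1,t+1)$... wait, the inequality goes the other way: Proposition \ref{Prop-1} states $\la_1(S(K_{s+2,t+2}))>\la_1(S(K_{s+1,t+3}))$, which in my notation is $f(s,t)>f(s-1,t+1)$. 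So moving mass \emph{toward} balance increases $f$. Iterating, starting from $(s,t)$ and repeatedly replacing $(s,t)\mapsto(s+1,t-1)$ as long as $s+1\le t-1$, the value of $f$ strictly increases at each step, and the process terminates precisely at the balanced pair $(s^\ast,t^\ast)$ with $t^\ast-s^\ast\in\{0,1\}$, namely $s^\ast+2=\lfloor(n-1)/2\rfloor$, $t^\ast+2=\lceil(n-1)/2\rceil$. Therefore $f(s^\ast,t^\ast)\ge f(s,t)$ for every admissible pair, with equality only when no step was taken, i.e.\ $(s,t)=(s^\ast,t^\ast)$. This is exactly the claimed inequality and equality characterization, once we note $f(s^\ast,t^\ast)=\la_1(S(K_{\lfloor(n-1)/2\rfloor,\lceil(n-1)/2\rceil}))$.

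The only genuine content is Proposition \ref{Prop-1}, which is assumed here (its proof is deferred to the Appendix), so the work left in Proposition \ref{Prop-2} is purely organizational: verifying that the ``shift toward balance'' map has the claimed fixed point, that each intermediate pair still satisfies the hypotheses of Proposition \ref{Prop-1} (so each comparison is legitimate), and that the subdivided complete bipartite graph at the balanced pair is literally $S(K_{\lfloor(n-1)/2\rfloor,\lceil(n-1)/2\rceil})$. The main (minor) obstacle is a bookkeeping one: making sure the reindexing between ``$(s,t)$ with $s+t=n-5$'' and ``$(s+2,t+2)$ with sum $n-1$'' is done consistently, and handling the small/boundary cases of $n$ (e.g.\ ensuring $s\ge 1$ throughout, which forces $n\ge 7$ for the statement to be nonvacuous) so that the telescoping chain of strict inequalities is valid. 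No analytic estimates are needed beyond invoking Proposition \ref{Prop-1}.
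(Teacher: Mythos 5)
Your proposal is correct and matches the paper's intended route: the paper proves only Proposition \ref{Prop-1} and simply asserts in the Appendix that it implies Proposition \ref{Prop-2}, with exactly the iteration-toward-balance argument you spell out being the implicit content. Your bookkeeping (the balanced pair is $(s,t)=(\lfloor\frac{n-1}{2}\rfloor-2,\lceil\frac{n-1}{2}\rceil-2)$, which the paper's equality statement records, with a slight abuse, as $(s+2,t+2)=(\lfloor\frac{n-1}{2}\rfloor,\lceil\frac{n-1}{2}\rceil)$) and the strictness argument for the equality case are both right.
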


\noindent
{\bf Proof of Theorem \ref{Thm-triangleorder}.}
Suppose that $G$ is a non-bipartite triangle-free
graph of order $n$ with the maximum spectral radius.
We shall show
$G=S(K_{\lfloor \frac{n-1}{2}\rfloor,\lceil \frac{n-1}{2}\rceil})$.
First we claim that $G$ is connected; since
otherwise we can add a new edge between a component
with the maximum spectral radius and any other
component to get a new graph with larger spectral
radius. We also observe that adding any new
edge gives us at least one triangle.

Let $x=(x_1,\ldots,x_n)^t$ be the Perron vector of
$G$ and $u$ be a vertex of $G$ with
$x_u=\max\{x_i|i=1,\ldots,n\}$. Let
$C=u_1u_2\cdots u_ku_1$ be a shortest odd cycle of
$G$ with $k\geq 5$. We have the following claims.

\begin{claim}\label{Claim-distance}
For any two vertices $x,y\in V(G)$, the distance
between $x$ and $y$ in $G$, denoted by $d_G(x,y)$,
satisfies that $d_G(x,y)\leq 2$.
\end{claim}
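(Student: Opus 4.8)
The plan is to argue by contradiction and exploit the extremality of $G$ through a single edge‑addition. Suppose, for contradiction, that some pair of vertices $v,w\in V(G)$ has $d_G(v,w)\geq 3$. First I would record the two structural consequences of this hypothesis: $vw\notin E(G)$, and $v$ and $w$ have no common neighbour (a common neighbour would witness $d_G(v,w)\leq 2$).

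Next I would pass to $G':=G+vw$, the graph obtained by adding the edge $vw$, and check that $G'$ is again a non‑bipartite triangle‑free graph on $n$ vertices. It clearly has order $n$; being a supergraph of $G$ it still contains the odd cycle $C=u_1u_2\cdots u_ku_1$, hence is non‑bipartite. For triangle‑freeness: since $G$ is triangle‑free, any triangle of $G'$ must use the new edge $vw$, so it has the form $vwz$ with $z$ adjacent to both $v$ and $w$; but $v$ and $w$ have no common neighbour, so no such $z$ exists and $G'$ is triangle‑free.

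Finally I would invoke the strict monotonicity of the spectral radius under edge addition for a connected graph. Since $G$ is connected, its Perron vector $x=(x_1,\dots,x_n)^t$ has all entries positive, and by the Rayleigh quotient $\lambda_1(G')\geq \frac{x^tA(G')x}{x^tx}=\frac{x^tA(G)x+2x_vx_w}{x^tx}=\lambda_1(G)+\frac{2x_vx_w}{x^tx}>\lambda_1(G)$. This contradicts the choice of $G$ as a graph of maximum spectral radius among all non‑bipartite triangle‑free graphs of order $n$. Hence no such pair $v,w$ exists, i.e.\ $d_G(x,y)\leq 2$ for all $x,y\in V(G)$.

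I do not expect a genuine obstacle here; the only point demanding a moment's care is verifying that $G'$ stays triangle‑free, and that is precisely where the hypothesis $d_G(v,w)\geq 3$ (as opposed to mere non‑adjacency) is used: distance at least $3$ rules out a common neighbour of $v$ and $w$, hence rules out a new triangle. Note that this claim does not use the maximum‑Perron‑weight vertex $u$; that vertex is presumably reserved for the subsequent claims.
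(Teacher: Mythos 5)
Your proposal is correct and follows essentially the same route as the paper: add the missing edge, observe the new graph is still non-bipartite, and use the strict increase of $\lambda_1$ under edge addition (valid since $G$ is connected) together with the extremal choice of $G$ to force a triangle through the new edge, i.e.\ a common neighbour. The only cosmetic difference is that you argue by contradiction from $d_G(v,w)\geq 3$ while the paper argues directly that any nonadjacent pair must acquire a triangle, and you spell out the Rayleigh-quotient justification that the paper leaves implicit.
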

\begin{proof}
For any two nonadjacent vertices
$x,y\in V(G)$, let $P=v_0v_1v_2\ldots v_l$ be a
shortest $(x,y)$-path in $G$, where $v_0=x$ and
$v_l=y$. Obviously, $l\geq 2$. Since $G+xy$ is not
bipartite and $\la_1(G+xy)>\la_1(G)$, by the
choice of $G$, there is a triangle passing
through the edge $xy$ in $G+xy$. That is,
there is an $(x,y)$-path of length 2 in $G$,
and so $d_G(x,y)=2$. This proves Claim \ref{Claim-distance}.
\end{proof}

\begin{claim}\label{Claim-k=5}
$k=5$.
\end{claim}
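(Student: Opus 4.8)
The plan is to argue by contradiction, assuming $k \geq 7$ (recall $k$ is odd and $k\geq 5$). The key structural fact available is Claim~\ref{Claim-distance}: $G$ has diameter at most $2$. On the other hand, a shortest odd cycle $C = u_1u_2\cdots u_ku_1$ with $k\geq 7$ necessarily contains two vertices at cycle-distance $3$, say $u_1$ and $u_4$, whose distance in $G$ must then be realized by a path of length $\leq 2$ using a vertex $w$ outside $C$ (a chord $u_1u_4$ would create a shorter odd cycle together with the $u_1u_2u_3u_4$ arc, or a triangle, contradicting minimality/triangle-freeness, and in any case $u_1u_4\notin E(G)$ since $d_C(u_1,u_4)=3$ and $C$ is shortest). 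So there is a vertex $w\notin V(C)$ with $wu_1, wu_4\in E(G)$. First I would record that $w$ can have no other neighbour on $C$: any such extra neighbour $u_j$ would produce either a triangle or a chord-shortcut yielding a strictly shorter odd closed walk (hence a shorter odd cycle), contradicting triangle-freeness or the minimality of $k$; one checks the two arcs $u_1\cdots u_4$ (length $3$) and $u_4\cdots u_1$ (length $k-3$, even) together with $w$, and the parity bookkeeping forces a contradiction for every position of $u_j$.

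Next I would derive the contradiction directly from triangle-freeness and the cycle structure. With $w u_1, w u_4 \in E(G)$ and no other neighbours of $w$ on $C$, consider the closed walk $w u_1 u_k u_{k-1} \cdots u_5 u_4 w$: going from $u_4$ back to $u_1$ the "short way" has length $3$, and replacing that arc by $u_4 w u_1$ (length $2$) turns the original $k$-cycle into a closed odd walk of length $k - 3 + 2 = k-1$, which is even — so instead I should use the complementary arc. Going the long way $u_1 u_2 u_3 u_4$ has length $3$ as well (it is the short arc), so the replacement $u_1 w u_4$ plus the long arc $u_4 u_5 \cdots u_k u_1$ (length $k-3$) gives a closed walk of length $k-1$, still even; whereas $u_1 w u_4$ together with the short arc $u_1 u_2 u_3 u_4$ gives a $5$-cycle $u_1 u_2 u_3 u_4 w u_1$ — wait, that has length $5$. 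This is precisely the desired contradiction when $k\geq 7$: we have exhibited a $5$-cycle, which is a shorter odd cycle than $C$, contradicting the minimality of $k$. So in fact the heart of the argument is: the diameter-$2$ property (Claim~\ref{Claim-distance}) forces a short connection across $C$, and that short connection manufactures a $5$-cycle.

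I would then clean this up carefully: fix $u_1, u_4 \in V(C)$ with $d_C(u_1,u_4)=3$; since $G$ has diameter $\le 2$ and $u_1 u_4 \notin E(G)$, pick $w$ with $w u_1, w u_4 \in E(G)$; triangle-freeness gives $w \notin \{u_2, u_3\}$ and $w$ nonadjacent to $u_2$ and $u_3$, and $w\notin V(C)$ follows since $k\geq 7$ makes every vertex of $C$ either $u_1,u_4$ or at cycle-distance $\geq 2$ from one of them in a way that would create a triangle or shorter cycle; finally $u_1 u_2 u_3 u_4 w u_1$ is a $5$-cycle, so $k \le 5$, hence $k = 5$. The main obstacle I anticipate is the bookkeeping that $w$ has exactly two neighbours $u_1, u_4$ on $C$ and lies off $C$ — the parity and minimality case-check over all possible extra neighbours $u_j$ — rather than the final cycle-production step, which is immediate once that is settled. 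A secondary subtlety is making sure the argument does not secretly assume $k$ is exactly $7$: it should work uniformly for all odd $k \geq 7$, which it does because we only ever use one arc of length $3$ on $C$, available whenever $k\geq 7$ (and even when $k=7$ the complementary arc has length $4$, so no accidental triangle with $w$ arises).
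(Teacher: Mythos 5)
Your argument is correct and is essentially the paper's own proof: since $C$ is chordless, $u_1u_4\notin E(G)$, so Claim~\ref{Claim-distance} yields a vertex $w$ outside $C$ adjacent to both $u_1$ and $u_4$, and $u_1u_2u_3u_4wu_1$ is a $5$-cycle contradicting the minimality of $k\geq 7$. The extra bookkeeping you do about $w$ having no further neighbours on $C$ is not needed for this conclusion, but it does no harm.
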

\begin{proof}
Suppose to the contrary that $k\geq 7$. Since
$C$ is chordless, $u_1u_4\notin E(G)$. By
Claim \ref{Claim-distance}, $d_G(u_1,u_4)=2$. This
means that there exists a vertex outside $C$,
say $v$, such that $u_1vu_4$ is a path of
length 2. Then $u_1vu_4u_3u_2u_1$ is a
cycle of length 5, a contradiction.
This proves the claim.
\end{proof}
If $V(C)=V(G)$, then $G$ is an induced 5-cycle,
and $G=S(K_{2,2})$. Now assume $V(G)\backslash V(C)\neq \emptyset$
and so $n\geq 6$.

\begin{claim}\label{Claim-neighbor-w}
For each vertex $w\in V(G)\backslash (N(u)\cup V(C))$, $N(w)=N(u).$
\end{claim}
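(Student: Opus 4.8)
The plan is to show that if some $w\in V(G)\setminus(N(u)\cup V(C))$ had $N(w)\neq N(u)$, then ``cloning'' $u$ onto the position of $w$ would not decrease the spectral radius, and the equality case of this operation would then force $N(w)=N(u)$ anyway. We may assume $w\neq u$, since otherwise the claim is trivial. Let $x=(x_1,\dots,x_n)^t>0$ be the Perron vector of $G$ (entrywise positive because $G$ is connected), normalized so that $x_u=\max_i x_i$, and write $\lambda_1:=\lambda_1(G)$. Form $G'$ from $G$ by deleting every edge incident with $w$ and then joining $w$ to each vertex of $N(u)$; since $w\notin N(u)$, this produces a simple graph on the same vertex set, with $N_{G'}(w)=N(u)$ and $N_{G'}(v)\triangle N_{G}(v)\subseteq\{w\}$ for every $v\neq w$.

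First I would verify that $G'$ is again a non-bipartite triangle-free graph of order $n$. It is triangle-free: $G'-w=G-w$ is triangle-free, and a triangle through $w$ in $G'$ would have to consist of $w$ together with an edge inside $N(u)$, which is impossible because $N(u)$ is an independent set in the triangle-free graph $G$. It is non-bipartite because $w\notin V(C)$, so the odd cycle $C$ survives untouched in $G'$. Next I would use $x$ as a test vector, together with the eigen-equations $\sum_{v\in N(u)}x_v=\lambda_1 x_u$ and $\sum_{v\in N_G(w)}x_v=\lambda_1 x_w$; a direct computation gives
$$x^{t}A(G')x=x^{t}A(G)x+2x_w\Big(\sum_{v\in N(u)}x_v-\sum_{v\in N_G(w)}x_v\Big)=x^{t}A(G)x+2\lambda_1 x_w(x_u-x_w)\ \ge\ \lambda_1\,x^{t}x,$$
since $x_u\geq x_w$. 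Hence $\lambda_1(G')\geq x^{t}A(G')x/x^{t}x\geq\lambda_1$, and by the maximality of $\lambda_1(G)$ among non-bipartite triangle-free graphs of order $n$ we conclude $\lambda_1(G')=\lambda_1$, whence $x^{t}A(G')x=\lambda_1(G')\,x^{t}x$.

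Finally I would extract the neighborhood equality from the equality case. Equality in the Rayleigh quotient for the symmetric matrix $A(G')$ forces $x$ to be a $\lambda_1$-eigenvector of $A(G')$, so $A(G)x=A(G')x=\lambda_1 x$ and therefore $(A(G)-A(G'))x=0$. The matrix $A(G)-A(G')$ is supported on row and column $w$, with $(A(G)-A(G'))_{vw}=\mathbf 1[v\in N_G(w)]-\mathbf 1[v\in N(u)]$ for $v\neq w$; reading off coordinate $v$ of $(A(G)-A(G'))x=0$ gives $\big(\mathbf 1[v\in N_G(w)]-\mathbf 1[v\in N(u)]\big)x_w=0$ for every $v\neq w$. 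Since $x_w>0$, we get $\mathbf 1[v\in N_G(w)]=\mathbf 1[v\in N(u)]$ for all $v\neq w$, and as $w\notin N_G(w)$ and $w\notin N(u)$ this also holds at $v=w$; hence $N(w)=N(u)$. The step I expect to be most delicate is this last one: a priori the equality case only yields that $u$ and $w$ have equal Perron-neighbor-sums (equivalently $x_w=x_u$), and one must exploit the fact that the entire discrepancy $A(G)-A(G')$ lives in the single row/column $w$ to upgrade this to the exact identity $N(w)=N(u)$; alongside this, checking that $G'$ remains non-bipartite — which is precisely why $V(C)$ is excluded from the hypothesis — is the other place where care is needed.
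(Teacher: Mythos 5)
Your proposal is correct and follows essentially the same route as the paper: the same rewiring $G'=G-\{wv:v\in N_G(w)\}+\{wv:v\in N(u)\}$, the same check that $G'$ stays triangle-free and non-bipartite, and the same Rayleigh-quotient comparison against the extremality of $G$. The only difference is cosmetic: in the equality case the paper picks a vertex $z\in N(u)\setminus N(w)$ and compares its eigen-equations in $G$ and $G'$ to reach a contradiction, whereas you read off $(A(G)-A(G'))x=0$ coordinatewise (using $x_w>0$ and the fact that the discrepancy lives in row and column $w$) to obtain $N(w)=N(u)$ directly — a slightly cleaner packaging of the same underlying fact that $x$ is a top eigenvector of both adjacency matrices.
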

\begin{proof}
If $V(G)\backslash (N(u)\cup V(C))=\emptyset$, then
there is nothing to prove. Thus
$V(G)\backslash (N(u)\cup V(C))\neq\emptyset$.
Suppose Claim \ref{Claim-neighbor-w} is false.
Let $w\in V(G)\backslash (N(u)\cup V(C))$
such that $N(w)\neq N(u)$.
Let $G'=G-\{wv|v\in N_G(w)\}+\{wv|v\in N_G(u)\}$.
Obviously, $G'$ contains no triangles and $C$ is
also in $G'$, and so $G'$ is not bipartite.

Observe that
\begin{align}\label{Ineq-2}
\la_1(G')-\la_1(G)\geq x^t(A(G')-A(G))x\geq 2x_w\left(\sum_{v\in N(u)}x_v-\sum_{v\in N(w)}x_v\right)\geq0.
\end{align}
If $N(w)\subsetneqq N(u)$, then $\la_1(G')>\la_1(G)$,
a contradiction. Therefore,
$N(w)\backslash N(u)\neq \emptyset$ and $N(u)\backslash N(w)\neq \emptyset$.
By the choice of $G$, $\la_1(G)\geq \la_1(G')$. Thus
all inequalities of (\ref{Ineq-2}) become equalities,
and so $x$ is also the Perron vector of $G'.$ On the
other hand, choose $z\in N(u)\backslash N(w)$, we have
$\la_1(G)x_z=\sum_{v\in N_G(z)}x_v<\sum_{v\in N_G(z)\cup \{w\}}x_v=\la_1(G')x_z$,
and hence $\la_1(G)<\la_1(G')$, a contradiction.
\end{proof}

\begin{claim}\label{Claim-3.4}
For any $s,t\in N(u)\backslash V(C)$, we have
$N(s)\cap V(C)=N(t)\cap V(C)$.
\end{claim}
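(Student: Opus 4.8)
The plan is to argue by contradiction via a local switching, exactly in the spirit of the proof of Claim~\ref{Claim-neighbor-w}. Write $N_C(v):=N(v)\cap V(C)$ and suppose, for contradiction, that $s,t\in N(u)\setminus V(C)$ satisfy $N_C(s)\neq N_C(t)$. I will use two elementary facts throughout: (i) since $G$ is triangle-free and $su,tu\in E(G)$, any common neighbour of $s$ and $u$ (resp.\ $t$ and $u$) would close a triangle, so $N(s)\cap N(u)=N(t)\cap N(u)=\emptyset$; and (ii) since $G$ is triangle-free, $N_C(s)$ and $N_C(t)$ are independent sets of $C$, and $C$ is chordless (being a shortest odd cycle). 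Relabelling $s$ and $t$ if necessary, we may assume $\sum_{v\in N_C(s)}x_v\geq\sum_{v\in N_C(t)}x_v$; this forces $N_C(s)\setminus N_C(t)\neq\emptyset$, since otherwise $N_C(s)\subsetneq N_C(t)$ and, as $x>0$, the weighted sums would be strictly ordered.

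Now form the switched graph
\[
G':=G-\{tv:v\in N_C(t)\setminus N_C(s)\}+\{tv:v\in N_C(s)\setminus N_C(t)\},
\]
so that $N_{G'}(t)=(N_G(t)\setminus V(C))\cup N_C(s)$ and $G'$ agrees with $G$ outside $t$. I claim $G'$ is a triangle-free, non-bipartite graph of order $n$. It is non-bipartite because the switching only alters edges at $t\notin V(C)$, so the odd cycle $C$ survives in $G'$. For triangle-freeness, note that $G'-t=G-t$ is triangle-free, so any triangle of $G'$ is of the form $\{t,a,b\}$ with $ab\in E(G)$ and $a,b\in(N_G(t)\setminus V(C))\cup N_C(s)$. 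If $a,b\in N_G(t)\setminus V(C)$, then $\{t,a,b\}$ is already a triangle of $G$. If $a,b\in N_C(s)\subseteq V(C)$, then $ab$ would be a chord or an edge of $C$, impossible since $C$ is chordless and $N_C(s)$ is independent in $C$. Finally, if $a\in N_G(t)\setminus V(C)$ and $b\in N_C(s)$, then $a\in N(t)$ together with fact (i) gives $a\notin N(u)$; since also $a\notin V(C)$, we get $a\in V(G)\setminus(N(u)\cup V(C))$, so $N_G(a)=N_G(u)$ by Claim~\ref{Claim-neighbor-w}; but then $b\in N_G(u)$, contradicting $N_C(s)\cap N(u)=\emptyset$. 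Hence $G'$ is triangle-free.

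With $G'$ an admissible competitor, the usual Rayleigh-quotient estimate gives
\[
\la_1(G')-\la_1(G)\ \geq\ x^{t}\bigl(A(G')-A(G)\bigr)x\ =\ 2x_t\Bigl(\sum_{v\in N_C(s)}x_v-\sum_{v\in N_C(t)}x_v\Bigr)\ \geq\ 0 .
\]
If this inequality is strict, it contradicts the maximality of $\la_1(G)$; otherwise $\la_1(G')=\la_1(G)$ and $x$ is a Perron vector of $G'$ as well. Choosing $a\in N_C(s)\setminus N_C(t)$, the switching adds only the edge $ta$ at $a$, so $N_{G'}(a)=N_G(a)\cup\{t\}$, and the eigenvalue equation at $a$ in $G'$ reads $\la_1(G)x_a=\la_1(G')x_a=\sum_{w\in N_G(a)}x_w+x_t=\la_1(G)x_a+x_t$, forcing $x_t=0$ — impossible, since $G$ is connected and hence $x>0$. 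This contradiction completes the proof of Claim~\ref{Claim-3.4}.

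The step requiring the most care, and the only place the earlier claims are genuinely needed, is the triangle-freeness of $G'$ — specifically the ``mixed'' case $\{t,a,b\}$ with $a$ a neighbour of $t$ off $C$ and $b$ a neighbour of $s$ on $C$. That case is killed precisely by combining Claim~\ref{Claim-neighbor-w} (which forces $N(a)=N(u)$) with the triangle-free identity $N(s)\cap N(u)=\emptyset$. The remainder is routine bookkeeping together with the eigenvector/Rayleigh computation already employed in Claim~\ref{Claim-neighbor-w}.
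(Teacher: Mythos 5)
Your proposal is correct and follows essentially the same route as the paper: a local rewiring of one offending vertex's neighbours on $C$ to match the other's, the Rayleigh-quotient estimate $\la_1(G')-\la_1(G)\geq x^{t}(A(G')-A(G))x\geq 0$, and the eigenvector equation at a newly joined vertex to rule out equality. The only cosmetic differences are that the paper selects the reference vertex as the one maximizing $x_w$ over $N(u)\setminus V(C)$ and deduces the ordering of the weighted sums from the eigenvalue equation (via Claim~\ref{Claim-neighbor-w}), whereas you order the two vertices by their weighted $C$-sums directly, and you spell out the triangle-freeness of $G'$ which the paper merely asserts.
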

\begin{proof}
Let $w\in N(u)\backslash V(C)$ such that
$x_w=\max\{x_v|v\in N(u)\backslash V(C)\}$.
Note that $N(u)$ is an independent set.
Let $Y=\{v\in V(G-C)|N(v)=N(u)\}$.
By Claim \ref{Claim-neighbor-w}, there holds
$\la_1(G)x_z=\sum_{v\in Y}x_v+\sum_{v\in N(z)\cap V(C)}x_v$
for any $z\in N(u)\backslash V(C)$. This implies
that $\sum_{v\in N(w)\cap V(C)}x_v\geq \sum_{v\in N(z)\cap V(C)}x_v$
for any $z\in N(u)\backslash V(C)$.

If there exists a vertex $z'\in N(u)\backslash V(C)$ such
that $N(z')\cap V(C)\neq N(w)\cap V(C)$, let
$G'=G-\{z'v|v\in N(z')\cap V(C)\}+\{z'v|v\in N(w)\cap V(C)\}$.
Then
$\la_1(G')-\la_1(G)\geq x^t(A(G')-A(G))x\geq 2x_j\left(\sum_{v\in N(w)\cap V(C)}x_v-\sum_{v\in N(z')\cap V(C)}x_v\right)\geq0.$
One can find $G'$ is connected, non-bipartite and
triangle-free. Similar to the proof of Claim \ref{Claim-neighbor-w},
we have $\la_1(G')>\la_1(G)$, a contradiction.
Thus, for any $z\in N(u)\backslash V(C)$,
$N(z)\cap V(C)=N(w)\cap V(C)$. This proves
Claim \ref{Claim-3.4}.
\end{proof}

Let $X=N(u)\backslash V(C)$ and $Y=\{v\in V(G-C)|N(v)=N(u)\}$.
From Claim \ref{Claim-neighbor-w}, we have $V(G)=X\cup Y\cup V(C)$.
In what follows, we only need to consider three cases:
(A) $u\notin V(C)$ and $X\neq \emptyset$; (B) $u\notin V(C)$ and $X=\emptyset$;
and (C) $u\in V(C)$.

Let us first consider Case (A). For this case, $X\neq \emptyset$
and $Y\neq \emptyset$.
By Claims \ref{Claim-neighbor-w} and \ref{Claim-3.4},
we have $X\cup Y=V(G)\backslash V(C)$,
$G-C=B(X,Y)\cong K_{s,t}$, where $|X|=s$,
$|Y|=t$, $t\geq s\geq 1$. Moreover, any
two vertices in $X$ have the same neighbor
(neighbors) of $C$, and any two vertices
in $Y$ also have the same neighbor
(neighbors) of $C$.
\begin{claim}\label{Claim-neighbors}
By symmetry, we have $N_C(X)=\{u_i,u_{i+2}\}$ and
$N_C(Y)=\{u_{j},u_{j+2}\}$ such that
$N_C(X)\cap N_C(Y)=\emptyset$.
\end{claim}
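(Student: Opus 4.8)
The plan is to pin down the shape of $N_C(X)$ and $N_C(Y)$ from triangle-freeness, and then to use the extremality of $G$ (no non-bipartite triangle-free graph of order $n$ has larger spectral radius) to force each of these sets to have size exactly two. Throughout, indices of $C$ are read modulo $5$, and I will use two structural facts. First, since $C$ is a shortest odd cycle of a triangle-free graph, $C$ is chordless, so $G[V(C)]\cong C_5$. Second, no vertex outside $C$ can be adjacent to two consecutive vertices of $C$ (that would create a triangle), so for every $w\notin V(C)$ the set $N_C(w)$ is independent in $C_5$; in particular $|N_C(w)|\le 2$, and when $|N_C(w)|=2$ it has the form $\{u_i,u_{i+2}\}$, since those are the only independent $2$-subsets of $C_5$. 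Applying this to vertices of $X$ and of $Y$ (whose $C$-neighbourhoods are the common sets $N_C(X)$, $N_C(Y)$, as noted just before the claim) gives $|N_C(X)|,|N_C(Y)|\le 2$, with the stated form whenever the size is $2$.

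Disjointness is then immediate: if some $u_\ell\in N_C(X)\cap N_C(Y)$, pick $w\in X$ and $v\in Y$ (both nonempty in Case~(A)); since $G-C=B(X,Y)\cong K_{s,t}$ we have $wv\in E(G)$, so $\{u_\ell,w,v\}$ is a triangle, a contradiction. Hence $N_C(X)\cap N_C(Y)=\emptyset$.

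It remains to rule out $|N_C(X)|\le 1$, after which $|N_C(Y)|\le 1$ is excluded by the symmetry between $X$ and $Y$ in $G-C$. Suppose $|N_C(X)|\le 1$ and fix $w\in X$, so $N_G(w)=Y\cup N_C(X)$. I would exhibit a vertex $u_m\in V(C)\setminus N_C(X)$ having no $G$-neighbour in common with $w$: such $u_m$ must avoid $N_C(Y)$ and, if $N_C(X)=\{u_\ell\}$, also the two cycle-neighbours $u_{\ell-1},u_{\ell+1}$ of $u_\ell$. If $N_C(X)=\emptyset$, only the at most two vertices of $N_C(Y)$ are excluded, so such $u_m$ exists; if $N_C(X)=\{u_\ell\}$, the excluded set is $\{u_{\ell-1},u_\ell,u_{\ell+1}\}\cup N_C(Y)$, and since the two remaining candidates $u_{\ell+2},u_{\ell+3}$ are consecutive on $C$ while $N_C(Y)$ is independent in $C_5$ and disjoint from $\{u_\ell\}$, at least one of $u_{\ell+2},u_{\ell+3}$ survives. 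For this $u_m$ one checks $N_G(u_m)=\{u_{m-1},u_{m+1}\}$ is disjoint from $N_G(w)$, so $G+wu_m$ is triangle-free; it still contains $C$, hence is non-bipartite, is connected, and has order $n$. But adding an edge to a connected graph strictly increases the spectral radius, so $\la_1(G+wu_m)>\la_1(G)$, contradicting the choice of $G$. Therefore $|N_C(X)|=2$, and symmetrically $|N_C(Y)|=2$. Combining with the first two paragraphs and relabelling the vertices of $C$ gives exactly the claim.

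The only slightly delicate point is the last case-check: showing a legal $u_m$ always exists when $N_C(X)=\{u_\ell\}$. The observation making it go through is purely about $C_5$—every independent $2$-set of $C_5$ misses some two consecutive vertices—so $N_C(Y)$ can block at most one of the two candidates $u_{\ell+2},u_{\ell+3}$. Everything else (the triangle checks for the new edge, and the monotonicity of the spectral radius under edge addition to a connected graph) is routine.
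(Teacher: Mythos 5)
Your proof is correct and follows essentially the same strategy as the paper: triangle-freeness forces $N_C(X)$ and $N_C(Y)$ to be independent subsets of the $5$-cycle of size at most two that are disjoint from each other, and the extremality of $G$ rules out $|N_C(X)|\le 1$ by adding a suitably chosen edge from $X$ to $C$ that keeps the graph triangle-free and non-bipartite while strictly increasing $\lambda_1$. The only (minor, welcome) difference is that you add a single edge and treat the case $N_C(X)=\emptyset$ explicitly, whereas the paper joins a vertex of $C$ to all of $X$ and only writes out the case $d_C(X)=1$.
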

\begin{proof}
We first observe that for any vertex
$v\in V(G\backslash C)$, $|N(v)\cap V(C)|\leq 2$,
since $G$ is triangle-free and $|V(C)|=5$. Next,
we shall show that $d_{C}(X)=d_{C}(Y)=2$. Since
$G$ is connected, we have $d_C(X)\geq 1$ or
$d_C(Y)\geq 1$. Assume that $d_C(X)=1$ and set
$N_C(X)=\{u_i\}$. If either $u_{i+2}$ or $u_{i-2}$
does not belong to $N_C(Y)$, then we connect such
a vertex to all vertices in $X$, and create a new
graph $G'$. Note that $G'$ is non-bipartite and
triangle-free but with larger spectral radius, a
contradiction. Hence $u_{i+2},u_{i-2}\in N_C(Y)$.
Since $C$ is a 5-cycle, $u_{i-2}u_{i+2}\in E(G)$
and there is a triangle in $G$, a contradiction.
Therefore, $d_{C}(X)=2$, and by symmetry,
$N_C(X)=\{u_i,u_{i+2}\}$. The other assertion
can be proved similarly. It follows from the
fact $G$ contains no triangles that
$N_C(X)\cap N_C(Y)=\emptyset$.
\end{proof}
By Claim \ref{Claim-neighbors} and the symmetry,
without loss of generality, we can assume
$N_C(X)=\{u_1,u_{3}\}$ and $N_C(Y)=\{u_2,u_4\}$.
By Claim \ref{Claim-k=5}, $C$ is an induced 5-cycle.
By Claims \ref{Claim-neighbor-w} and \ref{Claim-3.4}, all
vertices of $X$ are adjacent to $\{u_1,u_3\}$,
and all vertices of $Y$ are adjacent to
$\{u_2,u_4\}$. Furthermore, $u_5$ is a vertex
of degree 2 in $G$.
Observe that $G=S(K_{s+2,t+2})$. By Proposition
\ref{Prop-2} and the choice of $G$,
$G=S(K_{\lfloor\frac{n-1}{2}\rfloor,\lceil\frac{n-1}{2}\rceil})$.
This finishes Case (A).

Now we consider Case (B). For this case, 
By the fact that $G$ is connected and the choice of $G$,
we have $d_C(u)=2$. Set $N_C(u)=\{u_1,u_3\}$.
By Claim \ref{Claim-neighbor-w}, we infer
$G=S(K_{n-3,2})$. By Proposition \ref{Prop-2}, $\la_1(G)=\la_1(S(K_{n-3,2}))\leq\la_1(S(K_{\lfloor\frac{n-1}{2}\rfloor,\lceil\frac{n-1}{2}\rceil}))$£¬
where equality holds if and only if $n=6$
(recall that $n\geq 6$). Thus $G=S(K_{2,3})$
where $n=6$.

Finally we consider Case (C). By very similar
analysis as above, one can see
$G=S(K_{\lfloor\frac{n-1}{2}\rfloor,\lceil\frac{n-1}{2}\rceil})$.
The proof is complete. {\hfill$\Box$}

\section{Concluding remarks}\label{Sec:4}
In this paper, we consider the Bollob\'as-Nikiforov
Conjecture on the largest eigenvalue, the second
largest eigenvalue and size of a graph, and
settle the conjecture for triangle-free graphs,
improving the spectral version of Mantel's
theorem. We also prove two spectral analogs
of Erd\H{o}s' theorem. Many intriguing problems
with respect to this topic remain open.

\begin{itemize}
\item One can prove the following extension of Erd\H{o}s'
theorem: Let $G$ be a graph with order $n$ and the length
of odd girth at least $2k+3$. If $G$ is non-bipartite,
then $e(G)\leq \left(\frac{n-(2k-1)}{2}\right)^2+2k-1.$
The following question naturally arises: which class of
graphs can attain the maximum spectral radius among the
class of graphs above? From Theorem \ref{Thm-triangleorder},
we can see
$S(K_{\lfloor\frac{n-1}{2}\rfloor,\lceil\frac{n-1}{2}\rceil})$
is the answer for this problem when $k=1$.

\item Nosal \cite{N70} proved that every graph $G$ of size
$m$ satisfying $\lambda_1(G)>\sqrt{m}$ contains a triangle.
Nikiforov \cite[Theorem~2]{N09-2} proved that every graph
$G$ of size $m\geq 9$ contains a $C_4$ if $\lambda_1(G)>\sqrt{m}$.
Let $k$ and $m$ be two integers such that $k|m$ and $k$ is odd.
Let $S_{\frac{m}{k}+\frac{k+1}{2},k}$ be the graph obtained
by joining each vertex of $K_k$ to $\frac{m}{k}-\frac{k-1}{2}$
isolated vertices. Zhai, Lin and Shu conjectured a more general
one.

\begin{conj}[\cite{ZLS}]\label{Conj-ZLS}
Let $G$ be a graph of sufficiently large size $m$ without
isolated vertices and $k\geq 1$ be an integer. If
$\la_1(G)\geq \lambda_1(S_{\frac{m}{k}+\frac{k+1}{2},k})$,
then $G$ contains $C_t$ for every $t\leq 2k+2$ unless
$G=S_{\frac{m}{k}+\frac{k+1}{2},k}$.
\end{conj}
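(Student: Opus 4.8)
Conjecture~\ref{Conj-ZLS} is open; here is the route one would try, by the spectral--extremal method and induction on $k$. Suppose the statement holds for all smaller values of $k$ (the base $k=1$ being Nosal's theorem and Nikiforov's $C_4$-theorem together with their equality cases), and suppose for contradiction that some graph $G$ with no isolated vertices, sufficiently large size $m$, and $\la_1(G)\ge\la_1(S)$, where $S:=S_{\frac{m}{k}+\frac{k+1}{2},k}$, is $C_t$-free for some $t\le 2k+2$ and $G\ne S$; among all such $G$ pick one maximizing $\la_1$ (this exists, as $\la_1\le\sqrt{2m}$ and only finitely many graphs without isolated vertices have $m$ edges). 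Note $S=K_k\vee\overline{K_{m/k-(k-1)/2}}$ has a cycle of every length from $3$ to $2k$ but none of length $2k+1$ or $2k+2$, and $\la_1(S)$ is the positive root of $\mu^2-(k-1)\mu=m-\binom{k}{2}$, so $\la_1(S)=\sqrt m+\tfrac{k-1}{2}+o(1)$. The instances $3\le t\le 2k$ reduce to the inductive hypothesis (a $C_t$-free graph of size $m$ then has $\la_1\le\la_1(S_{\frac{m}{k'}+\frac{k'+1}{2},k'})<\la_1(S)$ for the relevant $k'<k$), so only $t\in\{2k+1,2k+2\}$ remains. One may also assume $G$ connected: otherwise replace it by its heaviest component $H$ with $m-e(H)$ pendant vertices added at the Perron-maximal vertex of $H$ -- still $C_t$-free with $m$ edges, and with strictly larger $\la_1$. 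Finally, for $k\ge2$ every bipartite graph has $\la_1\le\sqrt m<\la_1(S)$ by Nosal, so $G$ is non-bipartite; for $k=1$ one has $\la_1(S)=\sqrt m$ and the complete bipartite graphs lie exactly on the threshold, which is where ``sufficiently large $m$'' must be spent.

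Two structural engines then finish the job. First, a ``few heavy vertices'' lemma: let $x$ be the unit Perron vector, $u$ a vertex with $x_u$ maximum, $\la=\la_1(G)$; from $\la_1(G)\ge\la_1(S)$ one gets $\la^2-(k-1)\la\ge m-\binom{k}{2}$, and inserting this into $\la^2 x_u=d(u)x_u+\sum_{v\ne u}|N(u)\cap N(v)|\,x_v$ together with $\sum_v x_v^2=1$ and $\sum_v d(v)=2m$ forces the set $D=\{v:x_v>\varepsilon\}$ of heavy vertices to satisfy $|D|\le k$; i.e.\ $G$ is a few near-dominating vertices plus a sparse remainder -- a stability version of Nikiforov's inequality, and the spectral heart of the proof. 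Second, local modifications as in Claims~\ref{Claim-neighbor-w}--\ref{Claim-3.4} of Theorem~\ref{Thm-triangleorder}: whenever a vertex $w\notin D$ misses a vertex of $D$, replace $N(w)$ by $N(u)$ (a Kelmans-type compression, which one checks preserves $C_t$-freeness); the bound $\la_1(G')-\la_1(G)\ge x^t(A(G')-A(G))x\ge0$ with the usual strict-increase refinement forces every vertex outside $D$ to be adjacent to all of $D$, whence $D$ is itself a clique, on $\ell\le k$ vertices. Any edge inside $V(G)\setminus D$ would, with $D$ complete to everything, create cycles of all lengths up to $2\ell+2\le 2k+2$, contradicting $C_t$-freeness; so $V(G)\setminus D$ is independent and $G=K_\ell\vee\overline{K_{m/\ell-(\ell-1)/2}}$. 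Since $\la_1$ of this graph is strictly increasing in $\ell$ over the range $\ell=O(\sqrt m)$ (compare the defining equations $\mu^2-(\ell-1)\mu=m-\binom{\ell}{2}$, in the spirit of Proposition~\ref{Prop-2}), the hypothesis $\la_1(G)\ge\la_1(S)$ forces $\ell=k$, hence $G=S$ -- contradicting $G\ne S$ and giving the conjecture.

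The hard part, I expect, is twofold. Quantitatively, the estimates in the heavy-vertex lemma must be uniform in $m$ with an explicit threshold, and the $o(\sqrt m)$ error terms must be squeezed hard enough to yield $|D|\le k$ \emph{exactly}, not $k+O(1)$; this usually needs a two-step argument -- first $|D|=O(k)$, then remove the slack using that the near-extremal graph already looks split. Structurally, the even and odd cases pull against each other: a $C_{2k+2}$-free graph can be bipartite and close to a blown-up path or a complete bipartite graph $K_{a,b}$ (for $k=1$ such graphs already meet the threshold $\la_1=\sqrt m$, so there the statement is really about the longest cycle $C_{2k+2}$ and about $G=K_{1,m}$), whereas $S$ is far from bipartite; one must show no bipartite near-extremal configuration reaches $\la_1(S)$, and for $k=1$ this is genuinely tight and is exactly where the large-$m$ hypothesis enters. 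Reconciling the ``wants-to-be-bipartite'' obstruction from even cycles with the ``cannot-be-bipartite'' obstruction from odd cycles, uniformly in $m$, is the real obstacle; the connectivity reduction and the Kelmans bookkeeping, though fiddly, should be routine by comparison.
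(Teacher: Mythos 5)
This statement is Conjecture~\ref{Conj-ZLS}: the paper states it in the concluding remarks, attributes it to Zhai, Lin and Shu (a paper listed as ``in preparation''), and offers no proof of it. There is therefore nothing in the paper to compare your argument against, and you are right to present your text as a strategy rather than a proof. Judged on its own terms, though, the sketch has gaps that are not merely technical. The central one is the ``few heavy vertices'' lemma: the claim that $\la_1(G)\ge\la_1(S_{\frac{m}{k}+\frac{k+1}{2},k})$ forces $|D|\le k$ for $D=\{v: x_v>\varepsilon\}$ is asserted, not derived, and it essentially \emph{is} the conjecture in disguise -- the identity $\la^2 x_u=d(u)x_u+\sum_{v}|N(u)\cap N(v)|x_v$ plus $\sum_v x_v^2=1$ does not by itself localize the Perron mass on $k$ vertices, and no known stability version of Nikiforov's inequality delivers the exact bound $|D|\le k$ rather than $O(k)$. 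Until that lemma is supplied with a proof (and a specification of $\varepsilon$), the rest of the architecture has nothing to stand on.

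Two further steps would fail as written. First, the Kelmans-type compression ``replace $N(w)$ by $N(u)$'' does not in general preserve $C_t$-freeness: in the proof of Theorem~\ref{Thm-triangleorder} (Claims~\ref{Claim-neighbor-w} and~\ref{Claim-3.4}) the new graph stays triangle-free only because $N(u)$ is an independent set in a triangle-free graph; for $t=2k+1$ or $2k+2$ with $k\ge 2$, rewiring $w$ onto $N(u)$ can create new cycles of exactly the forbidden length through $w$, so the modification is not admissible without a separate argument. Second, your reduction of the cases $3\le t\le 2k$ to induction quietly uses that a $C_t$-free graph of size $m$ has $\la_1\le\la_1(S_{\frac{m}{k'}+\frac{k'+1}{2},k'})$ for $k'=\lceil t/2\rceil-1$ -- but that is again an instance of the conjecture being proved, so the induction must be set up on the pair $(k,t)$ with the base cases ($t=3,4$, i.e.\ Nosal and Nikiforov) carrying the equality analysis; as stated the induction is circular for intermediate $t$. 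You correctly flag the $k=1$ bipartite threshold and the uniformity in $m$ as the hard quantitative points, but the three issues above are where the proposal, as a proof, actually breaks.
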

When $k=1$, it includes Nosal's theorem  \cite{N70}
and Nikiforov's theorem \cite[Theorem~2]{N09-2} as
two special cases.

\item By replacing $\la_1(G)$ by $s^+(G)$, some spectral
graph theorists refined and generalized many classical
results on spectral graph theory. For example, Hong's
famous theorem states that $\la_1(G)\leq \sqrt{2m-n+1}$ \cite{H93}
holds for all connected graphs $G$ with order $n$ and
size $m$. (It in fact holds for all graphs without isolated
vertices.) Elphick, Farber, Goldberg and Wocjan
conjectured that
\begin{conj}[\cite{EFGW}]\label{Conj-EFGW}
Let $G$ be a connected graph of order $n$. Then
$\min\{s^+(G),s^-(G)\}\geq n-1$.
\end{conj}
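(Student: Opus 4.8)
\medskip
\noindent\textbf{Proof proposal.}
Write the spectral decomposition of $A(G)$ as $A(G)=A^{+}-A^{-}$, where $A^{+}$ and $A^{-}$ are positive semidefinite and $A^{+}A^{-}=0$; then $A^{+}$ is the nearest positive semidefinite matrix to $A(G)$ in Frobenius norm, $s^{+}(G)=\|A^{+}\|_{F}^{2}$, $s^{-}(G)=\|A^{-}\|_{F}^{2}$, and $s^{+}(G)+s^{-}(G)=\mathrm{tr}(A(G)^{2})=2m$. Since $G$ is connected we have $m\ge n-1$, so $\max\{s^{+},s^{-}\}\ge n-1$ is automatic and the whole content lies in the smaller sum. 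Because $f(z)=z(2m-z)$ is strictly increasing on $[0,m]$, because $\min\{s^{+},s^{-}\}\le m$, and because $s^{+}(G)\,s^{-}(G)=f(\min\{s^{+},s^{-}\})$, the conjecture is equivalent to the single product inequality $s^{+}(G)\,s^{-}(G)\ge(n-1)(2m-n+1)$. Any approach has to be sharp, since $\min\{s^{+},s^{-}\}=n-1$ already holds for every tree (bipartite, so $s^{+}=s^{-}=m=n-1$) and for $K_{n}$ (where $s^{-}=n-1$).

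The line of attack I would take is a structural analysis of a hypothetical minimal counterexample $G$, minimal first in $n$ and then in $m$. Bipartite graphs satisfy the bound trivially, so $G$ is non-bipartite and contains an odd cycle. The local control available comes from interlacing: under edge deletion, writing $A(G)=A(G-e)+E_{e}$ with $E_{e}$ of rank two and eigenvalues $1,-1,0,\dots,0$, Weyl's inequalities give $\lambda_{i+1}(G-e)\le\lambda_{i}(G)\le\lambda_{i-1}(G-e)$; under vertex deletion, Cauchy interlacing gives $\lambda_{i+1}(G)\le\lambda_{i}(G-v)\le\lambda_{i}(G)$. One also has the elementary bound $s^{+}(G)\ge\lambda_{1}^{2}\ge(2m/n)^{2}$ from the Rayleigh quotient and, more flexibly, $s^{+}(G)\ge\max_{Y\succeq 0}\mathrm{tr}(A(G)Y)^{2}/\|Y\|_{F}^{2}$ with $Y$ chosen to fit a dense subgraph. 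Using these I would first reduce to $2$-connected graphs (a cut vertex ought to let one pit the blocks against minimality of $n$ once the Frobenius cross-term between the blocks is controlled), then force large minimum degree and the absence of any ``harmless'' edge or vertex to delete, and finally use $s^{+}\ge(2m/n)^{2}$ to contradict density; in parallel I would dispose of the near-bipartite range (large odd girth, or few odd cycles) directly, bounding $|s^{+}-s^{-}|$ in terms of the odd-cycle structure and concluding from $s^{+}+s^{-}=2m\ge 2(n-1)$.

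The decisive obstacle is the complete absence of monotonicity. Adding an edge can strictly \emph{decrease} $\min\{s^{+},s^{-}\}$ --- for instance, going from $C_{4}$ to $C_{4}$ with one chord (the diamond) lowers it from $4$ to $(11-\sqrt{17})/2\approx 3.44$ --- and it can raise $s^{+}$ by more than the $2$ that $\mathrm{tr}(A^{2})$ gains, so $s^{-}$ falls; hence one cannot build $G$ up from a spanning tree one edge at a time and track a potential function, and vertex deletion merely shifts eigenvalues by one slot without pinning down $s^{\pm}$. The difficulty is thus genuinely global: turning sharp but purely local interlacing data into a lower bound that is tight simultaneously at trees and at $K_{n}$. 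I would expect the cut-vertex reduction to $2$-connected graphs and the near-bipartite range to be manageable, and the true hard core to be the $2$-connected intermediate-density case --- where neither $\lambda_{1}^{2}\ge n-1$ nor near-bipartiteness is of any use --- which is exactly where the conjecture has resisted proof despite extensive computational verification for special families.
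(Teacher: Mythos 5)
The statement you are addressing is not a theorem of the paper at all: it is an open conjecture of Elphick, Farber, Goldberg and Wocjan, quoted verbatim in the concluding remarks as motivation for a further problem, and the paper contains no proof of it (nor does it claim one). So there is no proof in the paper to compare yours against, and your submission is, by your own account, not a proof either: it is a programme (minimal counterexample, reduction to $2$-connected graphs, edge- and vertex-interlacing, the bound $s^{+}\geq\lambda_1^2\geq(2m/n)^2$, a separate near-bipartite regime) together with a diagnosis of why the programme stalls. Your preliminary observations are all correct --- $s^{+}+s^{-}=2m$, tightness for trees and for $K_n$, the equivalence with the product inequality $s^{+}s^{-}\geq(n-1)(2m-n+1)$ since $n-1\leq m$ for connected graphs, and the diamond example showing that $\min\{s^{+},s^{-}\}$ is not monotone under edge addition --- but none of this settles the conjecture.

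Beyond the hard core you identify yourself (the $2$-connected, intermediate-density case), two steps you label as ``manageable'' are not. First, the cut-vertex reduction: the spectrum of a graph with a cut vertex is not obtained from the spectra of its blocks in any simple way, so there is no clean superadditivity of $s^{+}$ or $s^{-}$ over blocks; the ``cross-term'' you defer is where that reduction actually lives, and no tool in your toolkit (Weyl, Cauchy interlacing) controls it at the required precision, since the target bound is tight already for trees, where every vertex of degree at least two is a cut vertex. Second, ``bounding $|s^{+}-s^{-}|$ in terms of the odd-cycle structure'' has no obvious trace-formula handle: $\sum_i\lambda_i^3=6t(G)$ counts triangles, but $s^{+}-s^{-}=\sum_i\lambda_i|\lambda_i|$ is not a polynomial trace and does not localize on odd cycles. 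So the proposal is an honest and well-informed survey of obstructions to an open problem, not a proof, and should not be presented as one.
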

Motivated by Conjecture \ref{Conj-EFGW}, we can
reconsider Conjecture \ref{Conj-BN} in
a general form.
\begin{prob}
Let $H$ be a given graph and $G$ be an $H$-free
graph of size $m$. How to estimate the upper bound
of $s^+(G)$ in terms of $H$ and $m$?
\end{prob}
Even if $H=K_{r+1}$, this problem seems harder than
Conjecture \ref{Conj-BN}. For graphs with given
chromatic number, a related problem was studied
by Ando and Lin in \cite{AL}.
\end{itemize}

\section*{Acknowledgement}
All the revisions have been made since the second author became an associate
professor at College of Computer Science, Nankai University.
The authors are very grateful to one anonymous referee whose
many suggestions largely improve the quality of the paper.

\section{Appendix}
In this section, we shall prove Proposition \ref{Prop-1},
which can imply Proposition \ref{Prop-2}.

\noindent
{\bf{Proof of Proposition \ref{Prop-1}.}}
Set $G_1=S(K_{s+2,t+2})$ and $G_2=S(K_{s+1,t+3})$.
Since both $G_1$ and $G_2$ contain $C_5$ as a proper
subgraph, we have $\la_1(G_i)>2$ for $i=1,2.$
The characteristic polynomial of $G_1$ is
$P_{G_{1}}(x)=x^{s+t}(x^5-(2s+2t+st+5)x^3+(4s+4t+3st+5)x-2s-2t-2st-2),$
where $|V(G_1)|=s+t+5$. Let
$f(x,s,t)=x^5-(2s+2t+st+5)x^3+(4s+4t+3st+5)x-2s-2t-2st-2.$
Then $\la_1(G_1)$ is the largest root of $f(x,s,t)=0$.
Note that $f(x,s-1,t+1)-f(x,s,t)=(x-1)^2(x+2)(t-s+1)$.
Since $t\geq s$, we have $f(x,s-1,t+1)-f(x,s,t)>0$ when
$x>1$. Moreover, since $\la_1(G_2)$ is the largest
root of $f(x,s-1,t+1)=0$, it follows that
$\la_1(G_{1})>\la_1(G_{2})$. {\hfill$\Box$}

We also include a proof of the result mentioned in
Section \ref{Sec:4} here. The proof uses a result
due to Andr\'{a}sfai, Erd\H{o}s and S\'{o}s \cite{AES} to
control the minimum degree.
\begin{lemma} {\rm(\cite{AES})}\label{lem6}
Let $G$ be a graph with order $n$ and odd girth 
at least $2k+1$ where $k\geq 1$. If the
minimum degree $\delta(G)>\frac{2n}{2k+1}$, then
$G$ is bipartite.
\end{lemma}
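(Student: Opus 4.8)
The plan is to prove the contrapositive: assuming $G$ is non-bipartite with odd girth at least $2k+1$, I would show that $\delta(G)\leq \frac{2n}{2k+1}$. First I would fix a shortest odd cycle $C=v_1v_2\cdots v_{2\ell+1}v_1$ in $G$, so that $\ell\geq k$ since the odd girth is at least $2k+1$; minimality of $C$ forces it to be induced, and more importantly no vertex of $G$ can have two neighbours on $C$ that are "close" along $C$ in a way that would create a shorter odd closed walk. Concretely, I would record the standard distance observation: for any vertex $x\in V(G)$ and any two neighbours $v_i,v_j$ of $x$ on $C$, the cyclic distance between $v_i$ and $v_j$ along $C$ cannot be even and small (otherwise one of the two arcs of $C$ between $v_i$ and $v_j$, together with the path $v_i x v_j$, yields an odd cycle shorter than $C$). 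This will be the structural engine of the argument.

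The main step is a counting/averaging argument on the neighbourhoods along $C$. Since $C$ has $2\ell+1$ vertices, consider $2\ell+1$ consecutive vertices; for a vertex $v_i\in V(C)$, look at $N(v_i)$ and the neighbourhoods of the vertices at the appropriate spacing around the cycle. The classical trick is to show that the sets $N(v_1), N(v_3), N(v_5),\ldots$ taken with the right cyclic stride are pairwise disjoint: if $v_a$ and $v_b$ are the relevant cycle vertices and they shared a common neighbour $x$, then $xv_a$ and $xv_b$ together with an arc of $C$ would close up into an odd cycle of length less than $2\ell+1$, contradicting minimality. Taking $2k+1$ such vertices spaced around $C$ (here is where $\ell\geq k$ is used — there is room on the cycle to pick $2k+1$ vertices whose neighbourhoods must be disjoint), disjointness gives
\[
\sum_{i=1}^{2k+1} |N(v_{a_i})| \leq n,
\]
where $v_{a_1},\dots,v_{a_{2k+1}}$ are the chosen cycle vertices. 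Hence $\min_i |N(v_{a_i})| \leq \frac{n}{2k+1}$, which already gives $\delta(G)\leq \frac{n}{2k+1} \leq \frac{2n}{2k+1}$.

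The delicate point — and the step I expect to be the main obstacle — is getting the disjointness of exactly $2k+1$ neighbourhoods rather than a smaller number, and pinning down the precise stride around $C$ so that every pair of chosen vertices is "far enough" in the even-distance sense to force $N(v_{a_i})\cap N(v_{a_j})=\emptyset$ while still fitting $2k+1$ of them onto a cycle that may be exactly of length $2k+1$ (the tight case $\ell=k$, where $C$ itself is the odd girth). In that extremal case one must be especially careful: the chosen vertices are then all of $V(C)$, and the disjointness claim for $N(v_i)$ and $N(v_{i+2 \bmod (2k+1)})$ must be checked directly from the no-shorter-odd-cycle property, noting that a common neighbour of $v_i$ and $v_{i+2}$ would create a triangle-or-shorter odd cycle. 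Once the disjointness bookkeeping is set up correctly, the averaging is immediate and the lemma follows. (An alternative route, if the direct stride argument proves fiddly, is to iterate: a non-bipartite graph of odd girth $\geq 2k+1$ and minimum degree $> \tfrac{2n}{2k+1}$ would, by a neighbourhood-expansion argument à la Andrásfai–Erdős–Sós, be forced to contain a shorter odd cycle; but I expect the disjoint-neighbourhoods counting to be the cleanest.)
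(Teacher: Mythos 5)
The paper does not actually prove this lemma --- it is quoted from Andr\'asfai--Erd\H{o}s--S\'os --- so your attempt can only be judged on its own merits, and unfortunately its central claim is false. You want $2k+1$ vertices of a shortest odd cycle $C=v_1\cdots v_{2\ell+1}v_1$ whose neighbourhoods are pairwise disjoint, which would give $\delta(G)\le \frac{n}{2k+1}$. That bound is strictly stronger than the lemma's $\frac{2n}{2k+1}$, and it is wrong: the balanced blow-up of $C_{2k+1}$ on $n$ vertices is non-bipartite, has odd girth $2k+1$, and has $\delta=\frac{2n}{2k+1}>\frac{n}{2k+1}$. The specific step that fails is exactly the one you flag as delicate: a common neighbour $x$ of $v_i$ and $v_{i+2}$ does \emph{not} create a shorter odd cycle. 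The short arc $v_iv_{i+1}v_{i+2}$ together with $v_ixv_{i+2}$ closes into a $C_4$ (even), and the long arc closes into an odd cycle of length $(2\ell+1-2)+2=2\ell+1$, the same length as $C$. In the blow-up of $C_5$, the vertices $v_1$ and $v_3$ of the shortest odd cycle really do share the whole class $I_2$ as common neighbours. One can check that two cycle vertices may share a neighbour precisely when their cyclic distance is $2$, and no selection of cycle vertices pairwise avoiding distance $2$ is large enough to recover the bound (on $C_5$ itself you can pick at most two such vertices).

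The standard repair keeps your shortest-odd-cycle setup but counts in the other direction. Show that \emph{every} vertex $x\in V(G)$ has at most two neighbours on $C$ (with $|C|=2\ell+1\ge 5$): if $x$ had three neighbours, the three arcs between consecutive neighbours have lengths summing to the odd number $2\ell+1$, so either all three are odd --- forcing each to be at least $2\ell-1$, impossible --- or exactly one arc has odd length $d$, and then $v_a x v_b$ plus that arc is an odd closed walk of length $d+2$, which is shorter than $2\ell+1$ unless $d=2\ell-1$, in which case the other two arcs have length $1$ and $x$ lies on a triangle. Hence
\[
(2k+1)\,\delta(G)\;\le\;(2\ell+1)\,\delta(G)\;\le\;\sum_{v\in V(C)}d(v)\;=\;\sum_{x\in V(G)}\bigl|N(x)\cap V(C)\bigr|\;\le\;2n,
\]
which is the desired inequality; the factor $2$ arises precisely because an outside vertex may be counted twice, i.e.\ because the disjointness you wanted genuinely fails. (As a side remark, the lemma as stated needs the shortest odd cycle to have length at least $5$; for $k=1$ the statement is vacuously about odd girth $\ge 3$ and is false, e.g.\ for $K_4$. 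The paper only ever invokes it with odd girth at least $5$, so this does not affect its use.)
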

\begin{theorem}\label{Thm-LNW}
Let $G$ be a graph with order $n$ and odd 
girth at least $2k+3$ where $k\geq 1$. If
$G$ is non-bipartite, then
$e(G)\leq \left(\frac{n-(2k-1)}{2}\right)^2+2k-1.$
\end{theorem}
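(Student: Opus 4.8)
The plan is to prove Theorem \ref{Thm-LNW} by an extremal/inductive argument on the order $n$, using Lemma \ref{lem6} to force a low-degree vertex and then deleting it. First I would check the base cases: for small $n$ the bound is easy (indeed $C_{2k+3}$ itself, together with isolated vertices, realizes a non-bipartite graph of odd girth $\geq 2k+3$, and one verifies the inequality directly). Then, for the inductive step, suppose $G$ is a non-bipartite graph of order $n$ with odd girth at least $2k+3$ and with the maximum number of edges. Since $G$ is non-bipartite and has odd girth $\geq 2k+3 > 2k+1$, Lemma \ref{lem6} (applied with the parameter so that the odd girth is at least $2(k+1)+1 = 2k+3$) tells us that $\delta(G) \leq \frac{2n}{2k+3}$.

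The key step is then to remove a vertex $v$ of minimum degree. The graph $G - v$ still has odd girth at least $2k+3$, but it may have become bipartite. If $G-v$ is still non-bipartite, then by the induction hypothesis $e(G-v) \leq \left(\frac{(n-1)-(2k-1)}{2}\right)^2 + 2k-1$, and adding back $d(v) \leq \frac{2n}{2k+3}$ edges, I would need the arithmetic inequality
\[
\left(\frac{n-1-(2k-1)}{2}\right)^2 + 2k-1 + \frac{2n}{2k+3} \leq \left(\frac{n-(2k-1)}{2}\right)^2 + 2k-1,
\]
i.e. $\frac{2n}{2k+3} \leq \frac{2n-2k}{4} \cdot \frac{1}{1}$ roughly — this holds for $n$ large relative to $k$, which is the regime of interest; one checks the threshold and handles the finitely many exceptional small $n$ by the base case. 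If instead $G-v$ \emph{is} bipartite, say with parts of sizes $a$ and $n-1-a$, then $G$ is obtained from a bipartite graph plus one vertex $v$; here I would use that $v$ together with the bipartite structure must create the short odd cycle, and a direct counting argument: $e(G-v) \leq a(n-1-a) \leq \left(\frac{n-1}{2}\right)^2$ is too weak, so instead one localizes — since the odd girth is at least $2k+3$, the neighborhoods of $v$ in the two parts are "far apart," which lets one bound $e(G)$ by essentially $\left(\frac{n-(2k-1)}{2}\right)^2 + 2k-1$ after accounting for the $2k-1$ vertices forced onto the short odd cycle through $v$.

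The main obstacle I expect is the bipartite case of the induction, i.e. when deleting the minimum-degree vertex destroys non-bipartiteness: there the clean inductive bound is unavailable and one must argue structurally about how a single vertex $v$ can be attached to a bipartite graph without creating an odd cycle shorter than $2k+3$. The right way to handle this is to identify a shortest odd cycle $C$ of $G$ (length $\geq 2k+3$), note it uses at least $2k-1$ "extra" vertices beyond a bipartition-like core, and show that the remaining $n-(2k-1)$ vertices can contribute at most $\left(\frac{n-(2k-1)}{2}\right)^2$ edges because they essentially form a bipartite-like configuration relative to $C$ — this is exactly the Erdős-type argument, and Lemma \ref{lem6} is the tool that prevents too many edges from accumulating near the odd cycle. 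Once the edge count on the "core" is controlled, adding the $\leq 2k-1$ edges of the odd-cycle part yields the stated bound, with equality analysis pointing to the subdivided complete bipartite graph when $k=1$.
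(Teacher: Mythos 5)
Your overall strategy coincides with the paper's: induct on $n$, use Lemma \ref{lem6} to force a vertex of small degree, delete it, and split according to whether the remaining graph is still non-bipartite. The non-bipartite subcase is essentially right, but the paper runs the degree bound the other way and more cleanly: it assumes $\delta(G)\geq \frac{n-(2k-1)}{2}$, observes that this exceeds $\frac{2n}{2k+3}$ whenever $n>2k+3$, and derives bipartiteness from Lemma \ref{lem6}; hence $\delta(G)\leq \frac{n-(2k-1)}{2}-\frac14$ for every $n\geq 2k+4$ and the inductive arithmetic closes with no exceptional cases. Your version, which compares $\frac{2n}{2k+3}$ directly with $\frac{n-(2k-1)}{2}-\frac14$, fails as a real-number inequality for $n$ close to $2k+3$ (e.g.\ $k=1$, $n=6,7$) and must be rescued by integrality of degrees; deferring those $n$ ``to the base case'' does not work, since the base case is only $n=2k+3$.

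The genuine gap is the bipartite subcase, which you correctly identify as the crux but never actually prove. The needed argument is concrete: since $G-v$ is bipartite with parts $X,Y$, every odd cycle of $G$ passes through $v$; fix a shortest odd cycle $C$ with $|C|\geq 2k+3$ and set $X_1=X\setminus V(C)$, $Y_1=Y\setminus V(C)$. Then (i) $C$ is induced, contributing $|C|$ edges; (ii) every vertex off $C$ has at most two neighbours on $C$, because three neighbours on a shortest odd cycle of length at least $5$ would yield a shorter odd cycle; (iii) the edges with both ends off $C$ number at most $|X_1||Y_1|$ by bipartiteness of $G-v$. Hence $e(G)\leq |C|+(|X_1|+2)(|Y_1|+2)-4\leq |C|+\left(\frac{n-|C|+4}{2}\right)^2-4$, and this function of $|C|$ is decreasing on $[2k+3,n]$, so its maximum is attained at $|C|=2k+3$, giving exactly $\left(\frac{n-(2k-1)}{2}\right)^2+2k-1$. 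None of steps (i)--(iii), nor the optimization over $|C|$ (which need not equal $2k+3$), appears in your sketch; and your assertion that ``Lemma \ref{lem6} is the tool that prevents too many edges from accumulating near the odd cycle'' is a misattribution --- Lemma \ref{lem6} plays no role in this subcase, which rests instead on the two-neighbour bound and the chordlessness of $C$.
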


\noindent
{\bf Proof of Theorem \ref{Thm-LNW}}
We prove Theorem \ref{Thm-LNW} by induction on
$n$. If $n=2k+3$, then the odd girth
is $2k+3$, and hence $G\cong C_{2k+3}$ and
$e(G)=2k+3$. The result holds. Now we assume
$n\geq 2k+4$ and the result holds for graphs
with order less than $n$.

First suppose $\delta(G)>\frac{n-(2k-1)}{2}-\frac{1}{4}$,
i.e., $\delta(G)\geq\frac{n-(2k-1)}{2}$. Since
$n>2k+3$, we have $(2k-1)n>(2k-1)(2k+3)$, which
implies $\frac{n-(2k-1)}{2}>\frac{2n}{2k+3}$. By
Lemma \ref{lem6}, $G$ is bipartite, a contradiction.
Thus $\delta(G)\leq \frac{n-(2k-1)}{2}-\frac{1}{4}$.
Let $v$ be a vertex with $d(v)=\delta(G)$ and let
$G'=G-\{v\}$.

If $G'$ is non-bipartite, then by the hypothesis, we have
$e(G')\leq(\frac{n-1-(2k-1)}{2})^2+2k-1=(\frac{n-(2k-1)}{2})^2+2k-1-\frac{n-(2k-1)}{2}+\frac{1}{4}.$
It follows $e(G)\leq \left(\frac{n-(2k-1)}{2}\right)^2+2k-1.$
Thus, $G'$ is bipartite. Then every odd cycle passes through
$v$ in $G$. Choose $C$ as a shortest odd cycle of $G$,
where $|C|\geq 2k+3$. Let $G'=B(X,Y)$, where $(X,Y)$ is
the bipartition of $G'$. Let $X_0=X\cap (V(C)-\{v\})$,
$Y_0=Y\cap (V(C)-\{v\})$, $X_1=X-X_0$ and $Y_1=Y-Y_0.$
Then
\begin{eqnarray*}
e(G)&=&e(G[C])+|E(C,G[X_1,Y_1])|+|E(G[X_1,Y_1])|\leq|C|+2|X_1|+2|Y_1|+|X_1||Y_1|\\
&=&|C|+(|X_1|+2)(|Y_1|+2)-4\\
&\leq&|C|+\left(\frac{|X_1|+2+|Y_1|+2}{2}\right)^2-4\\
&=&|C|+\left(\frac{n-|C|+4}{2}\right)^2-4.
\end{eqnarray*}
Let $f(x)=x+(\frac{n-x+4}{2})^2-4$. Then $f'(x)=1-(n-x+4)=x-(n+3)$.
Hence for $2k+3\leq x\leq n$, we have
$f(x)\leq f(2k+3)=\left(\frac{n-(2k-1)}{2}\right)^2+2k-1$.
The proof is complete. {\hfill$\Box$}
\begin{remark}
Theorem \ref{Thm-LNW}
maybe appeared in some reference. Since we cannot
find such one till now, we present a proof here
for completeness.
\end{remark}

\end{sloppypar}
\end{document}